\theoremstyle{plain}
\newtheorem{thm}{Theorem}[section]
\newtheorem{lemma}[thm]{Lemma}
\newtheorem{prop}[thm]{Proposition}
\newtheorem{cor}[thm]{Corollary}
\newtheorem{THM}{Theorem}
\newtheorem{COR}[THM]{Corollary}
\newtheorem{es}[thm]{Example}
\theoremstyle{remark}
\newtheorem{rem}[thm]{Remark}
\newcommand{\mb}{\mathbb}
\newcommand{\mc}{\mathcal}
\newcommand{\inv}{^{-1}}
\newcommand{\F}{\mc F}
\newcommand{\de}{\partial}
\DeclareMathOperator{\im}{Im
}
\DeclareMathOperator{\sing}{sing}
\numberwithin{equation}{section}
\numberwithin{equation}{section}       
\title{Smooth foliations on  homogeneous compact K\"ahler manifolds}
\author[F. Lo Bianco]{Federico Lo Bianco}
\address{I.R.M.A.R., Universit\'e de Rennes I, Campus de Beaulieu, 35042 Rennes Cedex, France}
\email{federico.lo-bianco@univ-rennes1.fr}
\author[J.V. Pereira]{Jorge Vit\'orio Pereira}
\address{IMPA, Estrada Dona Castorina, 110, Horto, Rio de Janeiro,
	Brasil}
\email{jvp@impa.br}
\date{\today}
\begin{document}

\begin{abstract}
We study smooth foliations of arbitrary codimension on
homogeneous compact K\"ahler manifolds. We prove that
smooth foliations on rational compact homogeneous  manifolds are locally trivial 
fibrations and classify the smooth foliations with all leaves analytically dense
on compact homogeneous K\"ahler manifolds. Both results are builded upon  a (rough) structure Theorem 
for smooth foliations on compact homogeneous K\"ahler manifolds obtained by comparison of the foliation
and the Borel-Remmert decomposition of the ambient.
\end{abstract}

\maketitle

\section{Introduction}

\subsection{Motivation}
Smooth codimension one foliations on compact homogeneous manifolds have been studied by Ghys
in \cite{MR1440947}. When the ambient manifold is a compact homogeneous K\"ahler manifold, a classification is given in \cite[Theorem 1.2]{MR1440947}.  In particular, if the ambient is a compact complex torus $X$ then the foliation is either a linear on foliation on $X$, or $X$ admits a projection $\pi : X \to E$ to an elliptic curve and $\mathcal F$ is transverse to the general fiber of $\pi$.

In this work we investigate smooth foliations of arbitrary codimension  on compact homogeneous K\"ahler manifolds.
At the beginning of our investigations we were aiming at a classification result similar to what we have in codimension one, but soon it became apparent that already in codimension two the situation is considerably more involved. We arrived at the example below, as well as at the other examples presented in Section \ref{S:example}, after reading   \cite{MR2030093}.

\begin{es}\label{E:dim1}
Let $Y$ be a compact homogeneous K\"ahler manifold and $\mathcal G$ be a one dimensional
foliation on $Y$ with isolated singularities. Assume there exists a section $\sigma \in H^ 0(Y,T^* \mathcal G)$ which does not vanish on $\sing(\mathcal G)$. Then if we take an arbitrary compact complex torus $A$ and choose an arbitrary vector field $v$ on $A$ we can define an injective morphism
\begin{align*}
\pi_Y^* T \mathcal G &\longrightarrow T X = \pi_Y ^ * TY \oplus \pi_A^ * T A  \\
w & \longmapsto ( w, \sigma(w) v )
\end{align*}
where $\pi_Y$ and $\pi_A$ are the projections from $X=Y\times A$ onto $Y$ and $A$ respectively.
The image of this morphism defines a smooth foliation $\mathcal F$ on the compact homogeneous K\"ahler manifold $X = Y \times A$ with dynamic/geometry at least as complicated as the dynamic/geometry of $\mathcal G$.  	
\end{es}

In the example above the fact that we started with a dimension one foliation with isolated singularities is not really important. We could use dimension one foliations with non-isolated singularities. The important thing to carry out the construction is to have sufficiently many independent sections of $T^* \mathcal G$ in order to generate a subbundle of $TX$ over $\mathrm{sing}(\mathcal G)$.

If we start with a foliation $\mathcal G$ of dimension at least two, then
generalizations of the above example are less obvious  since we have to  take care of the integrabilty condition.

\subsection{Rough structure}  Our first result is inspired by a Theorem of Brunella concerning the structure of (singular) codimension one foliations on complex tori,
see \cite{MR2674768}.

\begin{THM}
\label{thm A}
Let $\mathcal F$ be a smooth foliation on a homogeneous compact K\"ahler manifold $X$.
Then there exists
\begin{enumerate}
\item a locally trivial fibration $\psi: X \to X'$ onto a homogeneous compact K\"ahler manifold $X'$ with fibers isomorphic to a rational homogeneous manifold; and
\item a locally trivial fibration $\pi : X' \to Y$ onto a homogeneous compact K\"ahler manifold with  fibers isomorphic to a compact complex torus; and
\item a foliation $\mathcal F'$ on $X'$
\end{enumerate}
such that
\begin{enumerate}
\item $\mathcal F = \psi^* \mathcal F'$; and
\item $\pi_* T\mathcal F'$ is a locally free sheaf of rank $\dim \mathcal F'$; and
\item $\det \pi_* N \mathcal F'$ is ample; and
\item the dimension of $Y$ is at most the codimension of $\F$.
\end{enumerate}
\end{THM}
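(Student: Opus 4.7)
The plan is to combine the Borel--Remmert decomposition of $X$ with the classification of smooth foliations on rational compact homogeneous manifolds (the first main theorem of the paper, stated in the abstract) so as to first isolate and collapse the rational directions tangent to $\F$, and then analyze what remains over a torus fibration.

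By Borel--Remmert, $X \cong R \times T$ with $R$ a rational homogeneous manifold and $T$ the Albanese torus. Denote the projections by $p_R$ and $p_T$, and define the rational sub-foliation $\mc G \subseteq T\F \cap p_R^{*}TR$ as (the saturation of) the kernel of the composition $T\F \hookrightarrow TX \twoheadrightarrow p_T^{*}TT$. As the intersection of two involutive subsheaves of $TX$, the sheaf $\mc G$ is involutive. A first key point is to show that $\mc G$ is locally free of constant rank, hence defines a smooth sub-foliation of $\F$ whose leaves sit inside the Albanese fibers; I expect this to follow from homogeneity, since the transitive action of $\mathrm{Aut}^{0}(X)$ propagates the rank of the projection.

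On each Albanese fiber $R \times \{t\}$, the restriction of $\mc G$ is a smooth foliation on the rational compact homogeneous manifold $R$, and by the first main theorem of the paper it is a locally trivial fibration with rational homogeneous fibers. The transitive action on $X$ forces these fibrations to agree across $t$, so they glue into a single locally trivial fibration $\psi : X \to X'$ whose fibers are isomorphic to a fixed rational homogeneous manifold. Since $\mc G \subseteq T\F$, the foliation $\F$ descends to a foliation $\F'$ on $X'$ satisfying $\F = \psi^{*}\F'$. The quotient $X'$ is again homogeneous compact K\"ahler (it inherits a transitive action from $X$), so applying Borel--Remmert to $X'$ yields $X' \cong R' \times T$ (the Albanese is unchanged, since the fibers of $\psi$ are rational hence have trivial Albanese); I take $\pi : X' \to Y := R'$ to be the projection onto the rational factor, which is then the desired locally trivial fibration with torus fibers, and $Y$ is homogeneous compact K\"ahler as required.

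It remains to verify the four properties. The identity $\F = \psi^{*}\F'$ is by construction. By the maximality built into the definition of $\mc G$, the descended foliation $\F'$ admits no nontrivial sub-foliation tangent to the fibers of $\pi$; on each torus fiber this forces $T\F'$ to be translation-invariant and hence trivial, which yields both the local freeness and the correct rank $\dim \F'$ for $\pi_{*}T\F'$. The generic transversality of $\F'$ to the fibers of $\pi$ then gives $\dim Y \leq \codim \F$. Finally, ampleness of $\det \pi_{*}N\F'$ should follow from a positivity argument combining the rationality (in particular the Fano nature) of $Y = R'$ with the torus-transverse structure of $\F'$ along the fibers of $\pi$. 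The main obstacles I anticipate are (i) the constant-rank claim for $\mc G$, where the saturation must not introduce rank drops (and the gluing of fiberwise fibrations must be verified to be locally trivial, not just set-theoretic), and (ii) the ampleness assertion, which is the most delicate point and genuinely requires exploiting the torus-fibration structure of $\pi$ rather than just the positivity of line bundles on the rational base $Y$.
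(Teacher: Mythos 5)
There are two genuine gaps here, and they are structural rather than technical. First, your argument is circular within the logic of the paper: the statement that smooth foliations on rational compact homogeneous manifolds are locally trivial fibrations is Corollary B, which the paper \emph{deduces from} Theorem A (by combining the ampleness of $\det N\F'$ from Theorem A with Bott's vanishing theorem). You cannot invoke it to build the fibration $\psi$. Relatedly, your preliminary reductions lean on homogeneity of $X$ to get constant rank of $\mc G = \ker(T\F \to p_T^*TT)$ and to glue the fiberwise fibrations, but $\F$ is \emph{not} assumed invariant under $\mathrm{Aut}^0(X)$, so the transitive action does not propagate any rank or structure attached to $\F$; the rank of such projections can genuinely jump (compare Example 1.1, where the analogous projection to one factor degenerates along a divisor).

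Second, and more seriously, you take $Y$ to be the rational factor $R'$ of $X'$, which misses the central construction of the paper and makes conclusions (2) and (3) false. The correct $Y$ is the target of the \emph{normal reduction}: since $TX$ is globally generated, $\det N\F$ is base point free and defines a morphism $\phi\colon X \to \mb P H^0(X,\det N\F)^\vee$, which by the factorization lemma for homogeneous spaces factors through an equivariant fibration $\pi_0\colon X\to Y$ with $\det N\F = \pi_0^*\mc L'$, $\mc L'$ ample; Borel--Remmert then splits $\pi_0$ as $\psi$ followed by $\pi$, and $Y \cong R'\times T'$ generally has a nontrivial torus factor $T'$. Concretely, for a turbulent codimension-one foliation on a two-dimensional complex torus $X$ (so $R'=\{pt\}$), your recipe gives $Y=\{pt\}$ and $\pi_*T\F = H^0(X,T\F)=0$ because $T\F \cong \pi_0^*(\mc L')^{-1}$ is antiample along the base direction, contradicting conclusion (2); the paper's $Y$ is the elliptic curve $E$, along whose fibers $T\F$ and $\det N\F$ are trivial, which is exactly what makes $\pi_*T\F$ locally free of the right rank and $\det\pi_* N\F$ ample. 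The ampleness in conclusion (3) has nothing to do with $R'$ being Fano: it is the statement that $c_1(\det N\F)$ is the pullback of a Kähler class from $Y$, which is precisely what the normal reduction provides and what your construction never establishes. The bound $\dim Y \le \codim \F$ likewise comes from Bott's vanishing applied to $c_1(\det N\F)=\pi_0^*c_1(\mc L')$, not from any transversality of $\F'$ to the fibers of $\pi$ (which need not hold).
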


The first locally trivial fibration $\psi:X\to X'$ has all its fibers contained in the leaves of $\mathcal F$.
For the second locally trivial fibration $\pi : X' \to Y$ the behavior can be  different:
the general fiber is not necessarily invariant by $\mathcal F'$.

The proof of Theorem \ref{thm A} follows from an analysis of the linear system $|\det N\F|$. The smoothness of $\mathcal F$ together with the homogeneity
of $X$ implies that $|\det N \F|$ is base point free and therefore defines a morphism. A standard factorization result for morphisms from compact homogeneous manifolds together with Borel-Remmert structure Theorem allow us to conclude. Details are given in Section \ref{S:rough}.

\subsection{Smooth foliations on rational homogeneous manifolds} An immediate corollary of Theorem \ref{thm A}
is the fact that rational homogeneous manifolds only carry {\it trivial} foliations.

\begin{COR}
\label{cor B}
Smooth foliations on rational homogeneous manifolds are locally trivial fibrations.
\end{COR}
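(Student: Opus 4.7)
The plan is to apply Theorem \ref{thm A} directly to $\F$ and exploit the hypothesis that $X$ is rational homogeneous (equivalently, Fano with $h^{1,0}(X)=0$) to collapse all the non-trivial structure predicted by the theorem. The first observation is that $X'$ itself is rational homogeneous: by Borel-Remmert write $X' \cong A \times R$ with $A$ a complex torus and $R$ rational homogeneous, and note that since $\psi: X \to X'$ is a locally trivial fibration whose fibers are rational homogeneous (in particular connected with $H^0(\mathcal O)=\C$), one has $\psi_*\mathcal O_X = \mathcal O_{X'}$, so pulling back $1$-forms along the submersion $\psi$ gives an injection $H^0(X', \Omega^1_{X'}) \hookrightarrow H^0(X, \Omega^1_X)=0$. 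Hence $h^{1,0}(X')=0$, the torus factor $A$ reduces to a point, and $X'$ is rational homogeneous.

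The heart of the argument is to show that the torus fibration $\pi: X' \to Y$ is an isomorphism. Let $F$ be a fiber of $\pi$, a compact complex torus by assumption. Local triviality of $\pi$ yields a splitting $TX'|_F \cong TF \oplus \pi^* T_{Y,\pi(F)}$; the first summand is trivial because $F$ is a torus, and the second is trivial as the pullback of a vector space concentrated at a point. Therefore $-K_{X'}|_F = \det TX'|_F$ is the trivial line bundle on $F$. On the other hand $X'$ is Fano, so $-K_{X'}|_F$ is ample on the projective subvariety $F$. A line bundle that is simultaneously trivial and ample forces $\dim F = 0$, so $\pi$ is an isomorphism, and in particular $\dim Y = \dim X'$.

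It then remains to exploit item (4) of Theorem \ref{thm A}. Since the fibers of $\psi$ are contained in the leaves of $\F = \psi^* \F'$, one has $\codim_X \F = \codim_{X'} \F' = \dim X' - \dim \F'$. Combined with $\dim Y = \dim X'$ from the previous step, item (4) becomes $\dim X' \le \dim X' - \dim \F'$, which forces $\dim \F' = 0$. Consequently $\F = \psi^* \F'$ is the foliation whose leaves are the fibers of the locally trivial fibration $\psi: X \to X'$, as desired.

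The main obstacle is the torus-collapse step, where the Fano property of $X'$ plays the decisive role by contrasting the triviality of $TX'|_F$ with the ampleness of $-K_{X'}|_F$. Once $\pi$ is shown to be an isomorphism, item (4) of Theorem \ref{thm A} and a direct dimension count do all the remaining work.
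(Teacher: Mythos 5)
Your proposal is correct and its skeleton coincides with the paper's: apply Theorem \ref{thm A}, show the torus fibration $\pi$ collapses, and then use ampleness of the (direct image of the) determinant of the normal bundle together with Bott's vanishing to force $\F'$ to be the foliation by points. The differences are in how two intermediate steps are justified. To kill $\pi$, the paper simply invokes the fact that the normal reduction respects the Borel--Remmert decomposition (so the torus component of $X'$, hence of $Y$, is already trivial and $\pi=id_{X'}$), whereas you give a self-contained Fano argument: $-K_{X'}$ restricted to a torus fiber $F$ is both trivial (from the relative tangent sequence --- note you only need the equality of determinants $\det TX'|_F=\det TF\otimes \det N_{F/X'}$, not an actual splitting, which need not exist) and ample, forcing $\dim F=0$. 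For the final step the paper reapplies Bott's vanishing directly to the ample bundle $\det N\F'$ to get $\codim \F'=\dim X'$, while you route the same computation through item (4) of Theorem \ref{thm A} (which is exactly Remark \ref{R:bound}, i.e.\ Bott's vanishing applied to $c_1(\mc L)$) plus the observation $\codim_X\F=\codim_{X'}\F'$; these are equivalent. Your version is slightly longer but has the merit of not relying on the unproved-in-detail compatibility of $\psi$ and $\pi$ with the Borel--Remmert product structure.
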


It is perhaps worth mentioning that in dimension two the only rational surfaces (not homogeneous a priori) which carry
smooth foliations by curves are the Hirzebruch surfaces ( $\mathbb P^1$ bundles over $\mathbb P^1$ ) according to \cite[Proposition 4]{MR1474805}. In higher dimensions
we  are not aware of examples of smooth foliations on rationally connected manifolds which are not fibrations.

\subsection{Minimal smooth foliations}
Although Example \ref{E:dim1} shows that we have a lot of freedom to construct smooth foliations on projective homogeneous manifolds,
all the examples constructed along the same guidelines will have invariant compact proper subvarieties: the pre-images of the irreducible components of the singular set of $\mathcal G$ under the natural projection $Y\times A \to Y$.

It seems natural to enquire if this is just a coincidence or a general phenomena. In other terms, what can we say about the  smooth foliations on compact K\"ahler homogeneous manifolds that do not leave proper compact subvarieties invariant ? Our second main result tells that there are not many possibilities: the foliation is essentially a linear foliation on a compact torus.

\begin{THM}
\label{thm C}
Let $\mathcal F$ be a smooth foliation on a homogeneous compact K\"ahler manifold $X$. If every leaf of $\mathcal F$ is analytically
 dense (meaning it is not contained in any proper compact subvariety) then
there exist a locally trivial fibration $\pi: X \to Y$ with rational fibers onto a complex torus $Y$ and a linear foliation $\mathcal G$ on $Y$ such that
$\mathcal F= \pi^* \mathcal G$.
\end{THM}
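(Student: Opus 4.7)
The plan is to apply Theorem A and exploit the analytic density hypothesis to collapse the intermediate torus fibration, reducing everything to a linear foliation on a torus. Applying Theorem A, I obtain $X \xrightarrow{\psi} X' \xrightarrow{\pi} Y$ with $\mathcal F = \psi^* \mathcal F'$. Since the fibers of $\psi$ are connected, compact, and contained in leaves of $\mathcal F$, a leaf $L'$ of $\mathcal F'$ sitting inside a proper compact $Z' \subsetneq X'$ would yield the leaf $\psi^{-1}(L') \subset \psi^{-1}(Z') \subsetneq X$ of $\mathcal F$, contradicting the hypothesis; hence leaves of $\mathcal F'$ are analytically dense as well. Smoothness then forbids proper $\mathcal F'$-invariant divisors (and likewise for $\mathcal F$), since such a divisor would be a union of leaves and thus contain some leaf properly.

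The key step is to show $\dim Y = 0$. The rank condition $\pi_* T\mathcal F' = \dim \mathcal F'$ forces $T\mathcal F'$ to be trivial along each torus fiber $T$ of $\pi$: the $\dim \mathcal F'$ sections restrict to translation-invariant vector fields in $T\mathcal F'|_T$ (since $TX'|_T$ is trivial), and a rank count shows they span a trivial sub-bundle coinciding with $T\mathcal F'|_T$. The quotient $N\mathcal F'|_T$ is therefore also trivial, so $\pi_* N\mathcal F'$ is locally free of rank $\codim \mathcal F'$, the evaluation map $\pi^* \pi_* N\mathcal F' \to N\mathcal F'$ is an isomorphism, and in particular $\det N\mathcal F' \cong \pi^* \det \pi_* N\mathcal F'$. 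Assuming $\dim Y > 0$, ampleness of $\det \pi_* N \mathcal F'$ yields, for $k \gg 0$, a nonzero section cutting out a proper effective divisor $D \subsetneq Y$; pulling back, one obtains a nonzero section of $(\det N\mathcal F')^{\otimes k}$ on $X'$ with zero divisor $\pi^{-1}(D) \subsetneq X'$. The standard fact that the zero divisor of any global section of a power of $\det N\mathcal F'$ is $\mathcal F'$-invariant (provable via the meromorphic twisted $q$-form defining $\mathcal F'$) then produces a proper invariant divisor, a contradiction. Hence $\dim Y = 0$ and $X'$ is itself a compact complex torus.

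On the torus $X'$, the rank-$\dim \mathcal F'$ sheaf $\pi_* T\mathcal F'$ becomes $H^0(X', T\mathcal F')$, and every holomorphic vector field on a torus is translation-invariant; these sections therefore span $T\mathcal F'$ pointwise, so $\mathcal F'$ is a linear foliation. Setting $Y := X'$ and $\mathcal G := \mathcal F'$, the map $\pi := \psi : X \to Y$ is the desired locally trivial fibration with rational (homogeneous) fibers onto a torus and satisfies $\mathcal F = \pi^* \mathcal G$. I expect the main obstacle to be the reduction $\dim Y = 0$: one must combine the triviality analysis of $N\mathcal F'$ along the torus fibers (to identify $\det N\mathcal F'$ as a pullback from $Y$) with the foliation-theoretic principle that nonzero sections of powers of $\det N\mathcal F'$ cut out $\mathcal F'$-invariant divisors. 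Once these two ingredients are in place, the ampleness provides the invariant effective divisor that the density hypothesis rules out, and the rest of the argument runs mechanically.
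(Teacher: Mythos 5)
Your reduction to $X'$ via Theorem \ref{thm A}, the observation that leaves of $\mathcal F'$ are again analytically dense, and the final step (on a torus, the $\dim\mathcal F'$ independent constant sections of $T\mathcal F'$ span a trivial subbundle which must equal $T\mathcal F'$ because $\det T\mathcal F'$ is trivial) are all sound and consistent with the paper. The triviality of $T\mathcal F'$ and $N\mathcal F'$ along the torus fibers is also essentially correct, although you do not need to re-derive $\det N\mathcal F'\cong\pi^*\det\pi_*N\mathcal F'$: it is already part of the construction of the normal reduction.

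The gap is in the step that is supposed to force $\dim Y=0$. The ``standard fact'' you invoke --- that the zero divisor of any global section of a power of $\det N\mathcal F'$ is $\mathcal F'$-invariant --- is false, even for smooth foliations and even for sections pulled back from $Y$ through the normal reduction. Concretely, take $X=\mathbb P^2\times E$ with $E$ an elliptic curve and $\mathcal F$ generated by the nowhere-vanishing vector field $Z+\partial_t$, where $Z$ is a linear vector field on $\mathbb P^2$ with isolated zeros and $\partial_t$ is invariant on $E$ (this is Example \ref{E:dim1} with $Y=\mathbb P^2$, $\mathcal G$ of degree one, $\sigma$ a constant). Then $\det N\mathcal F=\pi_1^*\mathcal O_{\mathbb P^2}(3)$ is the pullback of an ample bundle from the base of the normal reduction, whose fiber is a torus, yet the zero divisor of a generic section is $C\times E$ with $C$ a generic cubic, which is not invariant since a generic cubic is not tangent to $Z$. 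So ampleness of $\det\pi_*N\mathcal F'$ does not, by itself, produce an invariant divisor, and your contradiction evaporates; the density hypothesis has to enter in a different way. The paper's route is to push $T\mathcal F'$ forward to an involutive subsheaf $\mathcal T\subset TY$, use density to kill the singular loci of $\mathcal T$ and of $TY/\mathcal T$ (these loci are $\mathcal T$-invariant by Lemmas \ref{L:invsing0} and \ref{invariance of Sing}, so their preimages would be proper invariant subvarieties), thereby obtaining a genuine smooth foliation $\mathcal G=\pi_*\mathcal F'$ on $Y$; then, after checking $\det N\mathcal F'=\pi^*\det N\mathcal G$ via the constancy of the Grassmannian map of Corollary \ref{cor: constant Gr}, it applies Bott's vanishing to $\mathcal G$ \emph{on $Y$}, whose codimension is at most $\dim Y$, so that ampleness forces $\mathcal G$ to be the foliation by points and density then forces $Y$ to be a point. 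Applying Bott's vanishing only to $\mathcal F'$ on $X'$, as your setup implicitly does, gives merely $\dim Y\le\operatorname{codim}\mathcal F'$, which is already in Theorem \ref{thm A}; the descent of the argument to the projected foliation is the missing idea.
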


The proof relies on our Theorem \ref{thm A}, Bott's vanishing Theorem, and the study of a natural rational map from $X$ to a certain Grassmannian
 which is  constant along fibers of the projection $X\to Y$ given by Theorem \ref{thm A} and  describes how the restriction of $\mathcal F$ to these very same fibers varies.

 By the Borel-Remmert theorem, a homogeneous compact K\"ahler manifold $X$ can be decomposed as a product $R\times T$ of a rational homogeneous complex variety $R$ (a generalized flag variety) times a compact complex torus $T$. It is easy to show that locally trivial fibrations preserve this decomposition, so that the morphism $\psi$ (respectively $\pi$) in Theorem \ref{thm A} restricts to  the identity on the torus component (respectively on the rational component).
 Analogously, the morphism $\pi$ in Theorem \ref{thm C} is nothing but the projection onto the torus component of the Borel-Remmert decomposition.

\subsection{Acknowledgements} This research was carried out while the first author was a visiting student at IMPA. We are grateful to IMPA for providing financial support for such visit, and to Serge Cantat for the suggestion of looking at smooth foliations on compact homogeneous manifolds.

\section{Foliations}

\subsection{Foliations as subsheaves of the tangent sheaf}

A (singular) foliation $\mathcal F$ on a complex manifold $X$ is determined by a coherent subsheaf $T \mathcal F$ of $TX$ such that
\begin{enumerate}
\item $T\F$ is involutive (closed under the Lie bracket); and
\item the quotient $TX/ T\mathcal F$ is torsion free.
 \end{enumerate}
The dimension of $\mathcal F$ is the generic rank of $T\mathcal F$, and the singular set of $\mathcal F$ is the
singular set of the sheaf $TX/T\mathcal F$.
A foliation $\mathcal F$ is smooth if, and only if, both $T\mathcal F$ and $TX / T \mathcal F$ are locally free sheaves.

\subsection{Foliations as subsheaves of the cotangent sheaf}

Alternatively, we can define a foliation through a coherent subsheaf $N^* \mathcal F$ of $\Omega^1_X$ such that

\begin{enumerate}
\item $N^*\F$ is integrable ( $d N^* \mathcal F \subset N^* \mathcal F \wedge \Omega^1_X$  ); and 
\item the quotient $\Omega_X^1/N^*\F$ is torsion free.
\end{enumerate}

The codimension of $\F$ is the generic rank of $N^*\mathcal F$. Similarly, a foliation $\F$ is smooth if, and only if, both $N^*\F$ and $\Omega_X^1/N^*\F$ are locally free sheaves.

\subsection{Foliations and differential forms}
If $\mathcal F$ is a foliation of codimension $q$ then from the inclusion $N^* \mathcal F \to \Omega^1_X$ we deduce  a morphism
$\det N^* \mathcal F \to \Omega^q_X$.  If we set $\mathcal L = (\det N^* \mathcal F)^*$ we
get a $q$-form $\omega \in H^0(X, \Omega^q_X \otimes \mathcal L)$ which defines the foliation
$\mathcal F$ in the sense that $T\mathcal F$ can be recovered as the kernel of the morphism
\begin{align*}
TX & \longrightarrow \Omega^{q-1}_X \otimes \mathcal L \\
v &\longmapsto i_v \omega \, .
\end{align*}
If $\F$ is smooth, all the sheaves that we have defined so far are actually locally free, so that
$$\mc L=\det(N^*\F)^*=\det(N\F).$$

\subsection{Bott's vanishing Theorem}

Given a smooth foliation $\mathcal F$ on a complex manifold $X$, Bott showed how to construct a partial holomorphic connection on the normal bundle of $\mathcal F$ and along the tangent bundle of $\mathcal F$. His construction goes as follows.
Given a germ of vector field $v$ tangent to $\mathcal F$ and a germ of section $\sigma$ of $N\mathcal F$ we want to be able to differentiate $\sigma$ along $v$. To do it, consider an arbitrary lift $\hat \sigma$ of $\sigma$ to $TX$, take the bracket of $v$ with $\hat \sigma$ and
project the result back to $N\mathcal F$. This defines a flat partial connection $\nabla$ on $N\mathcal F$, which is nowadays  called Bott's partial  connection.

Applying Chern-Weyl theory to compute the Chern classes of $N\mathcal F$ in terms of a $C^{\infty}$ {\it extension}  of $\nabla$ to a full connection, Bott proved the following fundamental result.

\begin{thm}
	Let $\mathcal F$ be a smooth foliation of codimension $q$ on a complex manifold. Any polynomial of degree at least $q+1$ on the Chern classes of the normal bundle of $\mathcal F$ vanishes identically.
\end{thm}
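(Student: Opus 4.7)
The plan is to apply Chern-Weil theory to $N\mathcal F$ via a smooth extension of the Bott partial connection $\nabla$, and exploit partial flatness to force the curvature to take values in the ideal generated by the conormal bundle $N^*\mathcal F$. Once this is established, any invariant polynomial of degree $\ge q+1$ evaluated on the curvature must vanish, because it will involve wedging more than $q$ sections of a rank-$q$ bundle.

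Concretely, I would first fix, via a partition of unity, a $C^{\infty}$ connection $\widetilde\nabla$ on $N\mathcal F$ whose restriction to $T\mathcal F$ coincides with $\nabla$, and let $R\in\Gamma(X,\Lambda^2 T^*_{\R}X\otimes\mathrm{End}(N\mathcal F))$ denote its curvature. The crucial step is to show that $R(v,w)=0$ for any local sections $v,w$ of $T\mathcal F$. By involutivity one has $[v,w]\in T\mathcal F$, and since $\widetilde\nabla$ agrees with $\nabla$ on $T\mathcal F$,
\[R(v,w)=[\widetilde\nabla_v,\widetilde\nabla_w]-\widetilde\nabla_{[v,w]}=[\nabla_v,\nabla_w]-\nabla_{[v,w]},\]
which vanishes because $\nabla$ is flat. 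A pointwise linear algebra argument then shows that $R$ lies, locally, in $N^*\mathcal F\wedge\Omega^1_X\otimes\mathrm{End}(N\mathcal F)$: if $\theta_1,\dots,\theta_q$ is a local frame of $N^*\mathcal F$, then $R=\sum_i \theta_i\wedge\beta_i$ for suitable $\mathrm{End}(N\mathcal F)$-valued $1$-forms $\beta_i$.

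By Chern-Weil theory, every (weighted) homogeneous polynomial of degree $k$ in the Chern classes of $N\mathcal F$ is represented by $P(R,\dots,R)$ for some $\mathrm{GL}_q$-invariant symmetric $k$-linear form $P$. Expanding with the local expression above, the resulting $2k$-form is a sum of terms each containing a wedge $\theta_{i_1}\wedge\dots\wedge\theta_{i_k}$; as soon as $k\ge q+1$, the pigeonhole principle applied to the indices $i_j\in\{1,\dots,q\}$ forces a repetition and the wedge vanishes. Hence $P(R,\dots,R)=0$ and the corresponding class in $H^{2k}(X,\C)$ is zero.

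The only substantive point in the argument is the vanishing of $R$ on $T\mathcal F\wedge T\mathcal F$: it uses simultaneously the flatness of Bott's partial connection and the involutivity of $\mathcal F$. Without involutivity the bracket $[v,w]$ could have a transverse component, along which $\widetilde\nabla$ is an arbitrary extension carrying no information from $\nabla$, and the cancellation in the displayed computation would fail.
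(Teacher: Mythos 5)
Your overall strategy is the one the paper alludes to (and that Bott's cited proof follows): extend the partial connection, compute curvature, and kill high-degree Chern--Weil forms by a pigeonhole on a rank-$q$ ideal. But there is a genuine gap at the step you call ``a pointwise linear algebra argument.'' From $R(v,w)=0$ for all $v,w\in T\mathcal F$ you conclude $R=\sum_{i=1}^q\theta_i\wedge\beta_i$ with $\theta_1,\dots,\theta_q$ a frame of $N^*\mathcal F$. This does not follow. The curvature of a $C^\infty$ connection is a form on the \emph{real} (complexified) tangent bundle, with components of types $(2,0)$, $(1,1)$ and $(0,2)$, while $T\mathcal F$ sits inside $T^{1,0}X$. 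Every $(1,1)$- and every $(0,2)$-form vanishes identically on $\Lambda^2 T\mathcal F$, so your hypothesis constrains only the $(2,0)$-part of $R$; for example $R=d\bar z_1\wedge d\bar z_2\otimes A$ satisfies it without lying in the ideal generated by $N^*\mathcal F$. The honest conclusion from your computation is that $R$ lies in the ideal generated by the annihilator of $T\mathcal F$ inside $\Omega^1_{X,\mathbb C}$, which is $N^*\mathcal F\oplus\Omega^{0,1}_X$ of rank $n+q$; the pigeonhole then only gives vanishing in degrees $>n+q$, which is vacuous. In particular, an arbitrary partition-of-unity extension of $\nabla$, as you propose, will in general not have curvature in $\langle\theta_1,\dots,\theta_q\rangle$.

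The missing idea is that the extension must be chosen as a \emph{basic} connection: its $(0,1)$-part should be the $\bar\partial$-operator of the holomorphic bundle $N\mathcal F$, and its restriction to $T\mathcal F$ should be Bott's partial connection. One then needs \emph{three} vanishings, not one: (i) $R^{0,2}=\bar\partial^2=0$; (ii) $R(v,w)=0$ for $v,w\in T\mathcal F$, which is exactly your displayed computation; and (iii) $R(v,\bar u)=0$ for $v\in T\mathcal F$ and $\bar u$ of type $(0,1)$, which amounts to the fact that the Bott derivative of a holomorphic section of $N\mathcal F$ along a holomorphic vector field tangent to $\mathcal F$ is again holomorphic (the partial connection commutes with $\bar\partial$). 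Together these say that $R$ vanishes on $\Lambda^2\bigl(T\mathcal F\oplus T^{0,1}X\bigr)$, a subbundle of complex corank exactly $q$ whose annihilator is $N^*\mathcal F$; only then is the decomposition $R=\sum_{i\le q}\theta_i\wedge\beta_i$ valid and your final pigeonhole step correct. Your closing remark correctly identifies involutivity and flatness as essential, but they enter only in step (ii); steps (i) and (iii) are where the holomorphic structure is used, and they are what upgrade the bound from the real-foliation estimate to the sharp degree $q+1$ claimed in the statement.
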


For a  proof see \cite[Proposition 3.27]{MR0377923}

\section{Rough structure of smooth foliations}\label{S:rough}

\subsection{Normal reduction}

We start by recalling \cite[Corollary 2.2]{MR1440947}
which we state below as lemma.

\begin{lemma}
\label{factorisation}
Let $G/H$ a compact homogeneous manifold and let $\phi\colon G/H\to Z$ be a surjective morphism. Then there exist a subgroup $K\supseteq H$ and a morphism
with finite fibers $\psi\colon G/K\to Z$ such that $\phi=\psi\circ \pi$, where $\pi\colon G/H\to G/K$ is the natural projection.
\end{lemma}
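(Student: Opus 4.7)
My plan is to produce the desired factorization via the Stein factorization of $\phi$ combined with Blanchard's descent principle to recognize the Stein factor as a homogeneous $G$-space.

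Since $G/H$ is compact, $\phi$ is proper, and I would begin by forming the Stein factorization $\phi=\psi_0\circ\pi_0$, where $\pi_0\colon G/H\to W$ is a surjective morphism with connected fibers onto a normal compact complex analytic space $W$, and $\psi_0\colon W\to Z$ has finite fibers. This already produces a natural candidate for $\psi$, and the remaining task is to identify $W$ with a homogeneous space $G/K$ for some closed subgroup $K\supseteq H$. To do this I would invoke Blanchard's descent principle: since $\pi_0$ is proper with connected fibers and $W$ is normal, we have $(\pi_0)_*\mathcal{O}_{G/H}=\mathcal{O}_W$, so any holomorphic action of a connected complex Lie group on $G/H$ descends uniquely to $W$ making $\pi_0$ equivariant. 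Replacing $G$ by its identity component if necessary (which still acts transitively on each connected component of $G/H$), I obtain a holomorphic $G$-action on $W$ for which $\pi_0$ is equivariant.

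Once the $G$-action on $W$ is in hand, the transitivity of the action on $G/H$ together with the surjectivity of $\pi_0$ force transitivity of the induced action on $W$. Let $K$ be the stabilizer of the point $\pi_0(eH)\in W$; since $H$ fixes $eH$ and $\pi_0$ is equivariant, $H\subseteq K$. Under the identification $W\cong G/K$ the morphism $\pi_0$ becomes precisely the natural projection $G/H\to G/K$, and $\psi:=\psi_0\colon G/K\to Z$ is the required morphism with finite fibers.

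The main obstacle I anticipate is the clean invocation of Blanchard's principle in the compact analytic setting, together with the mild technical point of first reducing to a connected group to meet its hypothesis; once those are handled, everything else is essentially bookkeeping from the universal property of the Stein factorization.
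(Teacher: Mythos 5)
The paper does not actually prove this lemma: it is quoted verbatim from Ghys \cite[Corollary 2.2]{MR1440947}, so there is no in-text argument to compare yours against. Your route — Stein factorization followed by Blanchard's descent principle to put a transitive $G$-action on the Stein factor $W$ and identify it with $G/K$ — is the standard proof of this statement and is essentially correct. Two small points deserve a word. First, after obtaining the transitive action you get a bijective holomorphic orbit map $G/K\to W$; to upgrade this to a biholomorphism you should either invoke normality of the Stein factor (a bijective holomorphic map onto a normal complex space of the same dimension is biholomorphic) or note that a complex space carrying a transitive holomorphic Lie group action is smooth. Second, your reduction to the identity component of $G$ is harmless in the context where the lemma is used (the paper applies it with $G=\mathrm{Aut}^0(X)$, already connected), but if one insists on the statement for a possibly disconnected $G$ with $H\not\subseteq G^0$, the full $G$-action need not descend to $W$ (a general $g\in G$ has no reason to permute the fibers of $\phi$), so the clean conclusion ``$K\supseteq H$ inside the original $G$'' requires either the connectedness hypothesis or an extra argument; it is worth stating that hypothesis explicitly. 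With those caveats, the argument is complete and is, as far as one can tell, the same mechanism underlying the cited result of Ghys.
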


We will apply this lemma to establish what we call the normal reduction of $\mathcal F$.

\begin{prop}\label{P:normal}
Let $\F$ be a smooth foliation on a compact homogeneous manifold $X$. Then there exist a projection $\pi\colon X\to Y$ of $X$ onto a compact homogeneous manifold $Y$, equivariant with respect to the action of $G=Aut^0(X)$, and an \emph{ample} line bundle $\mc L'$ on $Y$ such that $\mc L=\pi^*\mc L'$.
We call $\pi: X \to Y$ the {\bf normal reduction} of $\mc F$.
\end{prop}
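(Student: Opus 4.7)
The plan is to show that the line bundle $\mathcal L = \det N\mathcal F$ is globally generated, and then to factor the morphism it defines through a $G$-equivariant quotient by means of Lemma \ref{factorisation}.

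The key observation is that on any compact homogeneous manifold $X = G/H$ the tangent bundle is globally generated: transitivity of the $G$-action yields a surjection $\mathfrak g \otimes \mathcal O_X \twoheadrightarrow TX$. Consequently $\bigwedge^q TX$ is globally generated for every $q$. Since $\mathcal F$ is smooth, the sequence
\[
0 \to T\mathcal F \to TX \to N\mathcal F \to 0
\]
is an exact sequence of locally free sheaves, and taking $q$-th exterior powers with $q = \codim \mathcal F$ gives a surjection $\bigwedge^q TX \twoheadrightarrow \det N\mathcal F = \mathcal L$. Thus $\mathcal L$ is globally generated, the linear system $|\mathcal L|$ is base-point free, and it induces a morphism $\phi : X \to \mathbb P(H^0(X, \mathcal L)^*)$ with $\phi^* \mathcal O(1) \cong \mathcal L$.

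Let $Z := \phi(X)$, a projective subvariety by Chow. Applying Lemma \ref{factorisation} to $\phi : X \to Z$ yields a closed subgroup $K \supseteq H$ and a finite morphism $\psi : Y := G/K \to Z$ with $\phi = \psi \circ \pi$, where $\pi : X \to Y$ is the natural, hence $G$-equivariant, projection onto the compact homogeneous manifold $Y$. Setting $\mathcal L' := \psi^* \mathcal O_Z(1)$, one has $\mathcal L \cong \pi^* \mathcal L'$, and $\mathcal L'$ is ample because the pullback of an ample line bundle under a finite morphism is ample.

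I do not anticipate any significant obstacle in this argument; the one conceptual point worth underlining is that $\mathcal L$ itself (not merely a positive power of it) descends to $Y$, which is a direct consequence of the fact that $|\mathcal L|$ is base-point free \emph{from the start}, rather than after passage to some multiple.
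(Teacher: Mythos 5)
Your proof is correct and follows essentially the same route as the paper: global generation of $TX$ on a homogeneous manifold forces $\det N\mathcal F$ to be base-point free, the resulting morphism is factored through a homogeneous quotient via Lemma \ref{factorisation}, and ampleness of $\mathcal L'$ follows because it is the pullback of an ample bundle under a finite morphism. The only cosmetic difference is that you obtain global generation of $\mathcal L$ directly from the surjection $\bigwedge^q TX \twoheadrightarrow \det N\mathcal F$, while the paper passes through global generation of $N\mathcal F$ first.
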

\begin{proof}
Let $N\F$ and $T\F$ be respectively the normal and the tangent bundle of $\F$.
We have an exact sequence of sheaves (in this case, since $\F$ is smooth, of vector bundles)
$$0\to T\F\to TX \to N\F\to 0.$$
Since $X$ is homogeneous $TX$ is globally generated, thus $N\F$ is as well; in particular the base locus of the line bundle $\mc L:=\det(N\F)$ is empty, and we have a morphism
$$
\phi\colon X\to \mb P H^0(X,\mc L)^\vee\cong \mb P^N \qquad N:=h^0(X,\mc L)-1
$$
such that $\phi^*\mc O_{\mb P^N}(1)=\mc L$. Let $Z$ be the image of $\phi$.

Let $\pi\colon X\to Y$ and $\psi\colon Y\to Z$ be the maps defined by Lemma \ref{factorisation}, and $\mc L':=\psi^*(\mc O(1)_{|Z})$. Then $\mc L'$ is the pull-back by a finite morphism of an ample line bundle, and is therefore ample. To conclude it suffices to notice that
$\mc L=\phi^*(\mc O(1)_{|Y})=\pi^*\mc L'.$
\end{proof}

\begin{rem}\label{R:bound}
Since $\mathcal L'$ is ample we have that the dimension of $Y$ is given by the formula
\[
 \dim Y=\min \{n\geq 0| c_1(\mc L)^{n+1}=0\}.
\]
Bott's vanishing  theorem implies the dimension of $Y$ is bounded from above by the codimension of $\mathcal F$, i.e. $\dim Y\leq q$.
\end{rem}

\subsection{Conormal bundle on fibers of the normal reduction}
Let $\mathcal F$ be a smooth foliation on a compact homogeneous manifold $X$.
Let us consider a fiber $F=\pi\inv (q)$ of $\pi$, the normal reduction of $\mathcal F$ .
Notice that $F$ is a compact homogeneous manifold.

\begin{lemma}
\label{N*F gg around fibers}
There exists a neighbourhood $U\subset Y$ of $q$ such that, if we denote $V=\pi\inv(U)$, the sheaf $N^*\F_{|V}$ is globally generated, i.e. the morphism
$$H^0(V,N^*\F_{|V})\otimes \mc O_V\to N^*\F_{|V}$$
is surjective.
\end{lemma}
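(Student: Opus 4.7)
The plan is to trivialize $\mc L$ over a neighbourhood of the fiber $F$, obtain there a non-vanishing decomposable $q$-form $\tilde\omega$ defining $\F$, and then contract $\tilde\omega$ against global vector fields on $X$ (which exist in abundance by homogeneity) to produce the desired generators of $N^*\F|_V$.

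Concretely, I would shrink the neighbourhood $U\subset Y$ of the base point until $\mc L'|_U$ admits a nowhere-vanishing section $s$; then $\pi^*s$ trivializes $\mc L|_V$ on $V=\pi\inv(U)$, and the defining form $\omega\in H^0(X,\Omega^q_X\otimes\mc L)$ (where $q=\mathrm{codim}\,\F$, as in the earlier subsection) reads on $V$ as $\omega|_V=\tilde\omega\otimes\pi^*s$ for a unique non-vanishing $\tilde\omega\in H^0(V,\Omega^q_V)$. Smoothness of $\F$ turns the inclusion $\det N^*\F\hookrightarrow\Omega^q_X$ into an injection of line subbundles, so $\tilde\omega$ is locally decomposable as $\alpha_1\wedge\cdots\wedge\alpha_q$ for a local frame $\{\alpha_j\}$ of $N^*\F$.

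For any $v_1,\ldots,v_{q-1}\in H^0(X,TX)$ restricted to $V$, the iterated contraction $i_{v_{q-1}}\cdots i_{v_1}\tilde\omega\in H^0(V,\Omega^1_V)$ automatically takes values in the subsheaf $N^*\F|_V$, since contracting the decomposable form $\alpha_1\wedge\cdots\wedge\alpha_q$ by $q-1$ vectors yields a linear combination of the $\alpha_j$. To prove pointwise surjectivity at $x\in V$, I would pick a local frame $e_1,\ldots,e_n$ of $TX$ dual to a cotangent frame extending $\alpha_1,\ldots,\alpha_q$; then for each $j$, choosing $(v_1,\ldots,v_{q-1})$ with values $(e_1,\ldots,\widehat{e_j},\ldots,e_q)$ at $x$ produces a section of $N^*\F|_V$ whose value at $x$ is $\pm\alpha_j(x)$. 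Homogeneity of $X$ guarantees that each prescribed $e_i(x)$ is realized by a genuinely global vector field on $X$, so letting $j$ range from $1$ to $q$ yields global sections generating $N^*\F_x$. The only step demanding any care is the elementary linear algebra behind pointwise surjectivity, which is routine given the decomposability and non-vanishing of $\tilde\omega$; no further input beyond homogeneity of $X$ and the identity $\mc L=\pi^*\mc L'$ is needed.
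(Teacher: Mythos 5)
Your proposal is correct and follows essentially the same route as the paper: trivialize $\mc L=\pi^*\mc L'$ over a neighbourhood of the fiber to view $\omega$ as an honest $q$-form on $V$, use smoothness to write it locally as a decomposable product of a frame of $N^*\F$, and contract against global (multi)vector fields supplied by homogeneity of $X$. The only cosmetic difference is that the paper contracts $\omega$ against global sections of $\bigwedge^{q-1}TX$ and expresses an arbitrary local section of $N^*\F$ in terms of the resulting forms, whereas you contract against $(q-1)$-tuples of global vector fields and verify pointwise generation via dual frames; both are valid.
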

\begin{proof}
Let $p$ be a point of $F$ and let $U$ be a neighbourhood of $q$ where $TY$ and $\mc L=\pi_*(\det N\F)$ are both trivial (for example $U$ isomorphic to a product of discs).
Then the restriction of $\omega$ to $V=\pi\inv (U)$ is a global $q$-form.

In a neighbourhood of $p$, $\omega$ is decomposable as follows
$$\omega=\omega_1\wedge \ldots \wedge \omega_q,$$
where $\omega_i=i_{v_i}\omega$ for some decomposable local section $v_i$ of $\bigwedge ^{q-1}TX$. Since $X$ is a homogeneous manifold, $\bigwedge ^{q-1}TX$ is globally generated, so we can find global sections $\hat v_1,\ldots ,\hat v_N$ of $\bigwedge ^{q-1}TX$ such that $v_i=\sum_j \lambda_{i,j}\hat v_j$ for some local functions $\lambda_{i,j}$.

Now, any local section $\alpha$ of $N^*\F$ can be written as $\sum f_i\omega_i$ for some local functions $f_i$. We have then
$$\alpha=\sum_i f_i\omega_i=\sum_{i,j}f_i\lambda_{i,j}i_{\hat v_j}\omega,$$
which proves the statement since every $i_{\hat v_j}\omega$ is a global section of $N^*\F_{|V}$.
\end{proof}

The K\"ahler assumption plays no role in the Lemma above, but it is essential in the result below by Borel and Remmert, see for instance \cite[Theorem 2.5]{MR1440947}.

\begin{thm}[Borel-Remmert]
Let $X$ be a homogeneous compact K\"ahler manifold. Then there exists a decomposition
$$X\cong R\times T$$
where $R$ is a rational homogeneous projective manifold and $T$ is a complex torus.
\end{thm}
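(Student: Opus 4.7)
The plan is to construct the torus factor as the Albanese variety of $X$ and the rational factor as its fiber, then deduce the splitting from the structure of the holomorphic automorphism group $G:=\mathrm{Aut}^0(X)$, which acts transitively on $X$ by hypothesis.

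Since $X$ is K\"ahler, the Albanese morphism $a\colon X\to\mathrm{Alb}(X)$ is well defined; because automorphisms of $X$ induce translations on $\mathrm{Alb}(X)$, the map $a$ is $G$-equivariant. It follows that $a$ is a surjective submersion onto a complex subtorus $T:=a(X)$ with connected compact homogeneous K\"ahler fibers, and we set $R:=a^{-1}(0)$. By the universal property of the Albanese, no nonzero holomorphic $1$-form on $R$ extends to $X$, so a Leray spectral sequence argument gives $H^0(R,\Omega^1_R)=0$; and a diagram chase on the homotopy long exact sequence of $a$, using that $\pi_1(X)$ surjects onto $\pi_1(T)$, shows that $R$ is simply connected.

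Next I would invoke Matsushima's structure theorem for compact K\"ahler homogeneous manifolds: the complex Lie group $G$ is reductive and decomposes, up to a finite central kernel, as $G=S\cdot A$ with $S$ semisimple and $A$ a central complex abelian subgroup, the two factors commuting. The abelian part acts by translations on the Albanese and hence surjects onto $T$, while $S$ stabilises each fiber of $a$ and acts transitively on $R$. A compact simply connected K\"ahler manifold with a transitive semisimple complex group action is rational homogeneous by the Borel--Weil description of flag varieties, so $R\cong S/P$ for a parabolic subgroup $P\subset S$; in particular $R$ is projective.

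The main obstacle is to upgrade this picture to a biholomorphic product $X\cong R\times T$. Fixing a base point $x_0\in R$, I would consider the holomorphic map $\Phi\colon R\times A\to X$, $(x,g)\mapsto g\cdot x$. Commutativity of $S$ and $A$ together with transitivity of $S$ on $R$ makes $\Phi$ surjective, and one verifies that its fibers are the orbits of the discrete subgroup $A\cap \mathrm{Stab}(x_0)$, which one identifies with the kernel of $A\to T$. Passing to the quotient yields a bijective holomorphic map $R\times T\to X$; the rigidity $H^1(R,TR)=0$ of the rational homogeneous fiber, together with a dimension count, ensures that this map is a biholomorphism, producing the desired product decomposition.
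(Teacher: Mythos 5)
The paper does not prove this statement: it is quoted as a classical theorem of Borel and Remmert, with a pointer to \cite[Theorem 2.5]{MR1440947}, so there is no internal proof to compare yours against. Your sketch follows the standard route to Borel--Remmert (Albanese fibration, structure of $\mathrm{Aut}^0(X)$, triviality of the resulting bundle), which is the right plan, but two steps are not sound as written.

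First, the claim that a diagram chase on the homotopy exact sequence shows $R$ is simply connected does not work: since $\pi_2(T)=0$ the sequence only gives $1\to\pi_1(R)\to\pi_1(X)\to\pi_1(T)\to 1$, and surjectivity of $\pi_1(X)\to\pi_1(T)$ says nothing about $\pi_1(R)$; you would need injectivity of $\pi_1(X)\to\pi_1(T)$, which is itself a nontrivial part of the theorem. In the standard treatment one only gets $b_1(R)=0$ at this stage (degeneration of the Leray spectral sequence plus triviality of the monodromy), and simple connectivity of $R$ comes out a posteriori once $R$ is identified with a flag variety $S/P$. Second, and more seriously, the appeal to ``Matsushima's structure theorem'' for the reductivity of $G=\mathrm{Aut}^0(X)$ carries essentially the whole weight of the argument: Matsushima's 1957 result takes reductivity of the transitive group as a hypothesis and concludes the product decomposition, while proving that $\mathrm{Aut}^0(X)$ is reductive for compact homogeneous K\"ahler $X$ (equivalently, that the radical acts through translations of the Albanese) is precisely the hard content of Borel--Remmert; as stated, your argument is circular at this point. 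The final gluing step is fine in spirit, but identifying the fibers of $\Phi$ with orbits of $\ker(A\to T)$ requires knowing that this central subgroup acts trivially on $R$, which uses that parabolic subgroups are self-normalizing (so $S/P$ has no nontrivial $S$-equivariant automorphisms); and once you have a bijective holomorphic map between connected complex manifolds it is automatically biholomorphic, so the rigidity $H^1(R,TR)=0$ is not needed.
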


\subsection{Proof of Theorem \ref{thm A}}
Let $\mathcal F$ be is in the statement of Theorem \ref{thm A}, i.e. $\mathcal F$ is a smooth foliation on a homogeneous compact K\"ahler manifold.
Let $\pi_0\colon X\to Y$ be the normal reduction of $\mathcal F$, and let
$$X\cong R\times T,\qquad \text{ and } \qquad Y\cong R'\times T'$$
be the Borel-Remmert decompositions of $X$ and  $Y$ respectively.
The normal reduction of $\mathcal F$  respects the product structures of $X$ and $Y$; in other words, there exist two surjective morphisms $\pi_R\colon R\to R'$ and $\pi_T\colon T\to T'$ such that $\pi_0=\pi_R\times \pi_T$. Define
$$X'=R'\times T,$$
and let $\psi=\pi_R\times id_T$ and $\pi=id_{R'}\times \pi_T$, so that $\pi\circ\psi=\pi_0$.

Now, by Lemma \ref{N*F gg around fibers}, the vector bundle $N^*\F$ is globally generated around each fiber of $\pi_0$. Let $\alpha$ be a global section of $N^*\F$ around a fiber  of $\pi_0$, that we can see as a global holomorphic $1$-form at a neighborhood of this fiber. The fibers of $\psi$ are rational homogeneous manifolds, and since there exist no global holomorphic $1$-form on such manifolds, we have that the pull-back of $\alpha$ to any fiber of $\psi$ is identically zero. Therefore the fibers of $\psi$ are contained in leaves  of $\mathcal F$, and consequently there exists a foliation $\F'$ on $X'$ such that $\F=\psi^*\F'$.

Now let us prove that $\pi_*T\F'$ is a locally free sheaf of rank equal to $\dim \F$. Let $U\subset Y$ be a sufficiently small open  neighbourhood of a point $q \in Y$; then $TX'$ is trivial when restricted to $V:=\pi\inv(U)$.
A  section $s$ of $\pi_*T\F'$ on $U$ is by definition a section of $T\F'$ on $V$, so in particular it is a local section of $TX$ on $V$.
Since the fibers are parallelizable and compact, the restriction of $s$ to $F:=\pi\inv(q)$ is a constant vector field. The claim easily follows by writing the section in local coordinates on the base and global coordinates on the fiber.

By the properties of the normal reduction, we have $\det N\F=(\pi')^*\mc L$ for some ample line bundle $\mc L$ on $Y$. On the other hand
$$\det (N\F)=\psi^*\det(N\F')=\psi^*\pi^*\det(\pi_* N\F')=(\pi_0)^*\det(\pi_* N\F'),$$
where the second equality follows from the fact that $TY$ (and thus $N\mc F'$ and $\det N\mc F'$) is globally generated around the fibers of $\pi$. This shows that $\det \pi_* N \F'$ is an ample line-bundle.

Finally the bound on the dimension of $Y$ follows from Remark \ref{R:bound}.
\qed

\subsection{Proof of Corollary \ref{cor B}}
Let $\F$ be a smooth foliation on a homogeneous rational manifold $X$; apply Theorem \ref{thm A} to $X$. Since the torus component  of $X$ is trivial, we must have $\pi=id_{X'}$. So $\F=\psi^*\F'$ for some smooth foliation $\F'$ such that $\det N\F'$ is ample.
By Bott's vanishing theorem we have $c_1(\det N\F')^{q+1}=0$ where $q$ is the codimension of $\F'$; since $\det N\F'$ is ample we obtain $q=\dim X'$, so that $\F'$ is the foliation by points and $\F$ is the locally trivial fibration $\psi$.
\qed

\section{Foliations with all leaves analytically dense}

Throughout this Section we will suppose that the fibers of the normal reduction are tori.

\subsection{Direct image of the tangent bundle}

\begin{lemma}\label{L:direct image}
Let $\mathcal F$ be a smooth foliation on a compact homogeneous manifold $X$. If $\pi: X \to Y$ is the normal reduction and the fibers of $\pi$ are parallelizable, then the image $\mc T$ of the natural morphism $\phi\colon \pi_* T \mathcal F \to TY$ is an involutive subsheaf of $TY$.
\end{lemma}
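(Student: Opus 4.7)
The plan is to reduce involutivity of $\mc T$ to the involutivity of $T\F$ by exhibiting, locally on $Y$, lifts of sections of $\mc T$ to $\pi$-related sections of $T\F$ on $X$; once such lifts are available, the classical compatibility between $\pi$-relatedness and the Lie bracket finishes the job.

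First, I would make the morphism $\phi$ concrete using the parallelizability assumption, following the argument already employed in the proof of Theorem \ref{thm A}. Fix $q\in Y$ and choose a sufficiently small polydisc neighbourhood $U$ of $q$ so that $V:=\pi\inv(U)$ trivialises as $U\times F$, where $F=\pi\inv(q)$, and so that $TX_{|V}$ is holomorphically trivial (both are possible since $\pi$ is a locally trivial fibration and $F$ is parallelizable). For any section $\tilde s\in T\F(V)=(\pi_*T\F)(U)$, the restriction $\tilde s_{|F_y}$ to a fibre $F_y$ is a holomorphic map from a compact connected complex manifold to the trivialization of $TX_{|V}$, hence constant. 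In particular $d\pi(\tilde s(x))$ depends only on $\pi(x)$, so it descends to a section $\bar s\in TY(U)$; this descended section is exactly $\phi(\tilde s)$, and by construction the pair $(\tilde s,\bar s)$ is $\pi$-related.

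Second, given local sections $\bar s_1,\bar s_2\in \mc T(U)$, after possibly shrinking $U$ I can lift them to sections $\tilde s_1,\tilde s_2\in T\F(V)$ with $\phi(\tilde s_i)=\bar s_i$ (this is the defining property of the image sheaf). By the previous step each $\tilde s_i$ is $\pi$-related to $\bar s_i$. The involutivity of $T\F$ then gives $[\tilde s_1,\tilde s_2]\in T\F(V)$, and the standard fact that $\pi$-relatedness is preserved under Lie brackets yields $\phi([\tilde s_1,\tilde s_2])=[\bar s_1,\bar s_2]$. Hence $[\bar s_1,\bar s_2]$ lies in $\mc T(U)$, which proves involutivity.

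The only delicate point is the concrete description of $\phi$ carried out in step one: without compactness and parallelizability of the fibres, a section of $T\F$ on $V$ would not in general project to a genuine section of $TY$ on $U$, and the identification of $\phi(\tilde s)$ with a $\pi$-related vector field would break down. Once this is settled, the remainder is a routine manipulation with $\pi$-related vector fields.
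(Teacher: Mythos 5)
Your proposal is correct and follows essentially the same route as the paper: both arguments rest on the observation that, over a trivializing neighbourhood, a holomorphic section of $T\F$ restricted to a compact parallelizable fibre has constant coefficients, so it is $\pi$-related to its image under $\phi$, and involutivity of $\mc T$ then follows from involutivity of $T\F$ because $\pi$-relatedness is preserved by Lie brackets. The paper carries out this last step as an explicit computation in coordinates $x_i$ on the base and $y_j$ on the universal cover of the fibre, which is just the concrete form of your appeal to the standard fact about $\pi$-related vector fields.
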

\begin{proof}
Let $v,w$ be two local sections of $\mc T$ on $U\subset Y$. Up to restricting $U$, we can suppose that $v=\phi(v_0), w=\phi(w_0)$ for some local sections $v_0,w_0$ of $\pi_*T\F$, and we can identify $v_0$ and $w_0$ with  sections of $T\F$ on $V:=\pi\inv(U)$. Let $x_1,\ldots ,x_n$ be local coordinates on the base and $y_1,\ldots y_m$ be global coordinates on the (universal cover of the) fiber. Then we can write
$$v_0(x)=\sum_i a_i(x)\frac{\partial}{\partial x_i}+\sum_j b_j(x)\frac{\partial}{\partial y_j}  $$
for some holomorphic functions $a_i,b_j$ on $\pi \inv (U)$. Then
$$\phi(v_0)=\sum_i a_i(x)\frac{\partial}{\partial x_i};$$
by writing $w$ in the same way and using that $\frac{\partial}{\partial y_k} a_i = \frac{\partial}{\partial y_k} b_j=0$ for any choice of $i,j,k$
 we deduce  that $\phi$ commutes with Lie brackets. The lemma follows from the involutivity of $T\F$.
\end{proof}

\begin{rem}
According to Theorem \ref{thm A}, the sheaf $\pi_*T\mathcal F$ is locally free. But beware that this not imply
that $\mc T$ is a locally free subsheaf of $TY$. Moreover, even when $\mc T$ is locally free, it is not necessarily
the tangent sheaf of a foliation since $TY / \mc T$ is not necessarily torsion free.
\end{rem}

\begin{lemma}\label{L:invsing0}
If $\mc T$ is an involutive subsheaf of $TY$  then the singular locus of $\mc T$ is $\mc T$-invariant.
\end{lemma}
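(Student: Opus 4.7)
My plan is to identify a coherent ideal sheaf $\mathcal I \subset \mathcal O_Y$ cutting out $\sing(\mc T)$ scheme-theoretically, and then check that for every local section $v$ of $\mc T$ one has $v(\mathcal I) \subseteq \mathcal I$, which is exactly the statement that $\sing(\mc T)$ is $\mc T$-invariant.

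To construct $\mathcal I$ I would work locally. Let $r$ denote the generic rank of $\mc T$. On a small open set I pick a finite generating set $w_1, \ldots, w_N$ of $\mc T$ and a local frame $\{e_I\}$ of the locally free sheaf $\bigwedge^r TY$. Expanding the decomposable $r$-vectors $w_{i_1} \wedge \cdots \wedge w_{i_r} = \sum_I f_{i_1\cdots i_r, I}\, e_I$, I take $\mathcal I$ to be the ideal generated by all the coefficients $f_{i_1\cdots i_r, I}$; its vanishing locus is precisely where the image of $\bigwedge^r \mc T \to \bigwedge^r TY$ drops rank, i.e.\ where $\mc T$ fails to sit in $TY$ as a rank $r$ subbundle, so $V(\mathcal I) = \sing(\mc T)$.

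The key step is to show that for any local section $v$ of $\mc T$ the Lie derivative $\mathcal L_v$ preserves the subsheaf $\bigwedge^r \mc T$ of $\bigwedge^r TY$. This is where involutivity plays its role: by hypothesis $[v, w_{i_k}] \in \mc T$, so the Leibniz rule
\[
 \mathcal L_v(w_{i_1}\wedge \cdots \wedge w_{i_r}) = \sum_{k=1}^r w_{i_1} \wedge \cdots \wedge [v, w_{i_k}] \wedge \cdots \wedge w_{i_r}
\]
keeps decomposable generators inside $\bigwedge^r \mc T$, and linearity extends this to the whole subsheaf.

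Finally I would translate this infinitesimal stability back into the ideal-theoretic statement. Writing $\mathcal L_v e_I = \sum_J g_{IJ}\, e_J$ for local functions $g_{IJ}$ and applying $\mathcal L_v$ to $\xi = \sum_I f_{i_1\cdots i_r, I}\, e_I$, one finds that the components of $\mathcal L_v \xi$ in the frame $\{e_I\}$ have the form $v(f_{i_1\cdots i_r, I}) + \sum_J f_{i_1\cdots i_r, J}\, g_{JI}$; since $\mathcal L_v \xi \in \bigwedge^r \mc T$ these coefficients all lie in $\mathcal I$, and the correction sum already lies in $\mathcal I$, so $v(f_{i_1\cdots i_r, I}) \in \mathcal I$. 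Because $v$ acts on $\mathcal O_Y$ as a derivation, sending a generating set of $\mathcal I$ into $\mathcal I$ is enough to ensure $v(\mathcal I) \subseteq \mathcal I$. The only delicate point I anticipate is pinning down the natural scheme structure on $\sing(\mc T)$ as the vanishing locus of this ideal; once that identification is accepted, the argument is a direct consequence of the interplay between the Leibniz rule for $\mathcal L_v$ on $\bigwedge^{\bullet} TY$ and the involutivity of $\mc T$.
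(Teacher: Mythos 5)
Your invariance mechanism is sound, and it is in fact essentially the paper's proof of the companion Lemma \ref{invariance of Sing}: involutivity plus the Leibniz rule shows that $L_v$ preserves the image of $\bigwedge^r \mc T$ in $\bigwedge^r TY$, hence preserves the ideal generated by the coefficients of that image in any local frame. The gap is precisely at the step you flag as delicate, the identification $V(\mathcal I)=\sing(\mc T)$. Your ideal $\mathcal I$ cuts out the locus where the evaluation of $\mc T$ in $TY$ drops below rank $r$, i.e.\ where $\mc T$ fails to be a rank-$r$ \emph{subbundle} of $TY$. In this paper $\sing(\mc T)$ means the locus where the coherent sheaf $\mc T$ fails to be \emph{locally free}, cut out by the $r$-th Fitting ideal, and this is in general a strictly smaller set. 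Concretely, writing a generic top wedge of local sections of $\mc T$ as $\psi=h\,\psi_0$ with $\codim Z(\psi_0)\geq 2$, every section $\theta$ of $\bigwedge^r\mc T$ maps to $\alpha(\theta)\psi_0$ for a holomorphic function $\alpha(\theta)$; your ideal is $\mathrm{Im}(\alpha)$ times the ideal of coefficients of $\psi_0$, so $V(\mathcal I)=\sing(\mc T)\cup Z(\psi_0)$, whereas $\sing(\mc T)=V(\mathrm{Im}(\alpha))$. The two differ already for $\mc T$ generated by the Euler vector field $x_1\partial_{x_1}+x_2\partial_{x_2}$ on a surface: this $\mc T$ is locally free, so $\sing(\mc T)=\emptyset$, yet your $\mathcal I$ is the maximal ideal at the origin. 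Invariance of a larger analytic set does not yield invariance of a smaller one, so your argument as written proves a neighboring statement rather than the lemma; the distinction matters downstream, where the paper first kills $\sing(\mc T)$ and only afterwards, under a locally-free hypothesis, treats the subbundle locus in Lemma \ref{invariance of Sing}.

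The paper's proof repairs exactly this point: it defines $\alpha:\bigwedge^r\mc T\to\mc O_Y$ by dividing by $\psi_0$ and shows $v(\alpha(\theta))\in\mathrm{Im}(\alpha)$. The price of dividing out is one extra step that has no analogue in your computation: one must check that $L_v\psi_0$ is again a \emph{holomorphic} multiple of $\psi_0$. This follows because the a priori meromorphic quotient $\bigl(L_v\theta-v(\alpha(\theta))\psi_0\bigr)/\alpha(\theta)$ has poles contained in $Z(\psi_0)$, which has codimension at least two, so the coefficient function extends holomorphically. If you insert the factorization $\psi=h\psi_0$ and this codimension-two argument into your scheme, the rest of your computation goes through verbatim and yields the lemma as stated.
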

\begin{proof}
Since the claim is local it suffices to prove it locally around $q\in Y$.
Let $r$ be the generic rank of $\mathcal T$ and $v_1, \ldots, v_r$ be
local sections of $\mathcal T$ such that  $\psi=v_1\wedge \cdots \wedge v_r$
is a  non-zero local section of $\bigwedge^r TY$.
We can write $\psi$ as $h \psi_0$ where $h \in \mathcal O_{Y,q}$ and $\psi_0$ is a local section of $\bigwedge^r TY$
with zero locus of codimension at least two. Since the rank of $\mc T$ is $r$, it follows that the  image of every section of $\bigwedge^r \mc T$  in $\bigwedge^r TY$ is a multiple of $\psi_0$. We have then a morphism of $\mathcal O_Y$-modules
\[
\alpha : \bigwedge^r \mc T \to \mathcal O_Y
\]
that maps a local section $\psi = v_1 \wedge \ldots \wedge v_r$ to the local section $f$ of $\mathcal O_Y$ satisfying $\psi = f \cdot \psi_0$.
The singular locus of $\mc T$ is defined by the ideal $\alpha(\bigwedge^ r \mc T)$ which is nothing but the $r$-th Fitting ideal of $\mc T$, see \cite[Section 20.2]{MR1322960}.

Let $\theta = v_1 \wedge \cdots \wedge v_r$ be a local section of $\bigwedge^r \mc T$ and $v$ a local section of $\mc T$. We want to
show that $v( \alpha(\theta))$ is contained in the image of $\alpha$.
We first calculate the Lie derivative of $\theta:=v_1\wedge\ldots \wedge v_r$ along $v$:
$$L_v(\theta)=[v,v_1\wedge \ldots \wedge v_r]=\sum_{i=1}^n(-1)^{i+1}[v,v_i]\wedge v_1\wedge \ldots \wedge v_{i-1}\wedge v_{i+1}\wedge \ldots \wedge v_r,$$
where $[,]$ denotes the Schouten–Nijenhuis bracket, which generalises the Lie bracket on the exterior product $\bigwedge^r TY$. Since $v\in \mc T$ and $\mc T$ is involutive, we have $[v,v_i]\in \mc T$, so that $L_v(v_1\wedge \ldots \wedge v_r)$ is a local section of $\bigwedge ^r\mc T$.

On the other hand
$$L_v(\theta)=[v,\alpha(\theta)\psi_0]=\alpha(\theta)[v,\psi_0]+v(\alpha(\theta))\psi_0.$$
Now, $L_v(\theta)$ belongs to $\bigwedge^r \mc T$. Furthermore, for any choice of a non-zero section $\theta$ of $\bigwedge^r \mc T$, we have
$$[v,\psi_0]=\frac{L_v(\theta)-v(\alpha(\theta))\psi_0}{\alpha(\theta)}=f\cdot \psi_0$$
for some meromorphic function $f$. Since $[v,\psi_0]$ is a holomorphic function, the singular locus of $f$ must be contained in the zero locus of $\psi_0$; since the zero locus of $\psi_0$ has codimension at least $2$, we conclude that $[v,\psi_0]$ is a multiple of $\psi_0$,  say $[v,\psi_0] = h \psi_0$.

Therefore we can write
$$\alpha(L_v(\theta)) \psi_0 =\alpha(\theta) h \psi_0+ v(\alpha(\theta))\psi_0,$$
and conclude that
$$v(\alpha(\theta))=\alpha(L_v(\theta))-\alpha(\theta)h \in Im(\alpha)$$
as wanted.
\end{proof}

\begin{lemma}
\label{invariance of Sing}
Let $\mc T$ be a locally free involutive subsheaf of $TY$. Then the singular locus of $TY / \mc T$ is $\mc T$-invariant.
\end{lemma}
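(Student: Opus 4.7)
The plan is to mimic the proof of Lemma \ref{L:invsing0}, this time exhibiting the singular locus of $TY/\mc T$ as the zero locus of the natural map $\det \mc T \to \bigwedge^r TY$ and using the involutivity of $\mc T$ to show that $\mc T$-derivations preserve the corresponding ideal.

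First, I would fix $q \in Y$ and work locally. Let $r = \mathrm{rk}(\mc T)$, choose a local frame $e_1,\ldots,e_r$ of $\mc T$, and choose local coordinates on $Y$ yielding a frame $\{\beta_I\}_{|I|=r}$ of $\bigwedge^r TY$. Writing $\theta := e_1\wedge \cdots\wedge e_r$, the inclusion $\det \mc T \hookrightarrow \bigwedge^r TY$ sends $\theta$ to $\sum_I f_I \beta_I$ for some functions $f_I \in \mc O_Y$. The $f_I$ are precisely the $r\times r$ minors of the matrix representing $\mc T \hookrightarrow TY$ in the chosen frames, and by Fitting ideal theory (cf. \cite[Section 20.2]{MR1322960}) they generate the $r$-th Fitting ideal of $TY/\mc T$, which cuts out the singular locus of $TY/\mc T$.

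Next, for a local section $v$ of $\mc T$, I would compute the Lie derivative $L_v(\theta)$ in two different ways. Using the Schouten--Nijenhuis bracket,
$$L_v(\theta)=\sum_{i=1}^r e_1\wedge\cdots\wedge[v,e_i]\wedge\cdots\wedge e_r,$$
and involutivity of $\mc T$ gives $[v,e_i]\in \mc T$, so the right-hand side is a local section of $\bigwedge^r \mc T = \det\mc T$. Since $\det \mc T$ is locally generated by $\theta$, we obtain $L_v(\theta) = g\,\theta$ for some $g \in \mc O_Y$. On the other hand, expanding in the frame $\{\beta_I\}$ and writing $L_v(\beta_I)=\sum_J h_{IJ}\beta_J$, the Leibniz rule for $L_v$ gives
$$L_v(\theta)=\sum_I v(f_I)\beta_I + \sum_{I,J} f_I h_{IJ}\beta_J.$$
Equating with $g\theta = \sum_J g f_J\,\beta_J$ and comparing coefficients of $\beta_J$ yields $v(f_J) = g f_J - \sum_I f_I h_{IJ} \in (f_I)_I$, which is exactly the statement that $v$ preserves the ideal defining the singular locus of $TY/\mc T$.

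The main obstacle is conceptual rather than computational: one has to correctly identify the singular locus of $TY/\mc T$ with the ideal generated by the coefficients $f_I$ of $\theta$ in $\bigwedge^r TY$. Once this Fitting ideal identification is in place, involutivity of $\mc T$ combined with the Leibniz rule makes the verification essentially automatic, and the argument is in fact slightly cleaner than the one for Lemma \ref{L:invsing0}, since here $\mc T$ is already locally free and no auxiliary section $\psi_0$ of $\bigwedge^r TY$ with small-codimensional zero locus is needed.
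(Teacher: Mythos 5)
Your proof is correct and follows essentially the same route as the paper: identify the singular locus of $TY/\mc T$ with the ideal generated by the coefficients of $\theta=e_1\wedge\cdots\wedge e_r$ in a frame of $\bigwedge^r TY$, use involutivity to get $L_v(\theta)=g\,\theta$, and compare with the Leibniz expansion to conclude $v(f_J)$ lies in that ideal. The paper's version uses commuting vector fields $\xi_1,\ldots,\xi_n$ as the frame and cites the earlier lemma for $L_v(\theta)\in\bigwedge^r\mc T$, but the computation is the same as yours.
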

\begin{proof}
Since the claim is local it suffices to prove it locally around $q\in Y$.
Let $r$ be the rank of $\mc T$ and $v_1, \ldots, v_r$ be generators of $\mc T$ at a neighborhood of $q$. The singular locus
of $TY / \mc T$ is defined by the ideal $I$ generated by the  coefficients of  $\theta = v_1 \wedge \ldots \wedge v_r$.

Let $v$ be a local section of $\mc T\subset TY$. In the proof
of Lemma \ref{L:invsing0} we have learned that  $L_v(\theta)$ is a local section of $\bigwedge ^r\mc T$. As such it
can be written as a multiple of $\theta$, i.e. $L_v\theta = H \theta$ for some holomorphic function $H$.

Choose now commuting vector fields $\xi_1, \ldots, \xi_n$ around $q$ which generate $TY$ at a neighborhood of $q$.
Then we can write
\[
\theta =  \sum a_J \xi_J
\]
where $J=(j_1, \ldots, j_r)$  and $\xi_J = \xi_{j_1} \wedge \ldots \wedge \xi_{j_r}$.
If we compute $L_v \theta$ in this basis, we get
\[
L_v \theta  = \sum_J L_v (a_J \xi_J) = \sum_J \left(  v(a_J) \xi_J + a_J L_v (\xi_J) \right) .
\]
Since the left hand side is equal to $\sum_J H a_J \xi_J$,  it follows  that the ideal $I$ generated by the $a_J$'s ( the coefficients of $\theta$ ) is left invariant by
$v$, i.e. $v(I) \subset I$.
\end{proof}

\subsection{First integrals}

The restriction of $\det(N\F)$ around the fibers of $\pi$ is trivial, so that the foliation induced by $\F$ on fibers of $\pi$ is defined by a global holomorphic $1$-forms. Since the fibers are tori the only global $1$-forms are linear forms, so that $\F$ associates to each fiber $F$ a linear subspace of $T_0F$ (where we denote by $0$ the identity element for a choice of a group law on $F$). If $F=\pi\inv (q)$ and the image of $\phi$ has maximal rank at $q$, then the above subspace has dimension $\dim \F-\dim \mc G$, where $\mc G=\pi_*\F$ is the (singular) foliation on $Y$ whose leaves are the projections of the leaves of $\F$.

If $X=R\times T$ and $Y=R\times T'$ are the Borel-Remmert decompositions of $X$ and $Y$ (recall that we are assuming that the fibers of the normal reduction are tori), then $\pi$ is induced by a projection of tori $\pi_T\colon T\to T'$, which can be supposed to be a group homomorphism. The group law on $T'$ allows us to canonically identify every fiber with the fiber over $0\in T'$, and the above construction defines a rational function
$$f\colon Y\dashrightarrow Gr(k, \dim F),$$
where $k=\dim \F-\dim \mc G$;  $F$ is any fiber of $\pi$ (so that $\dim F=\dim X-\dim Y$); and $Gr(k,\dim F)$ is the Grassmannian of $k$-planes in $\mathbb C^ {\dim F}$.

\begin{lemma}\label{L:firstintegral}
The function $f$ is constant along the leaves of $\mc G$.
\end{lemma}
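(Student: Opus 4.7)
The plan is to exhibit, for any vector $v$ tangent to a leaf of $\mc G$ at a regular point $q$ of $\phi$, a local flow on $X$ that (i) preserves $\F$, (ii) projects to the flow of $v$ on $Y$, and (iii) acts on each fiber of $\pi$ by a translation of the torus. Since a torus translation carries every translation-invariant linear foliation back to itself, properties (i)--(iii) together will force the linear subspace $f(q)\subset T_0 F_q$ cut out by $\F$ to coincide, under the canonical identification of fibers, with $f(q(t))\subset T_0 F_{q(t)}$, where $q(t)$ denotes the flow of $v$ through $q$. This will prove constancy of $f$ along flow lines in $\mc T$, and hence along leaves of $\mc G$.

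To construct the flow, I would lift $v\in \mc T_q$ to a local section $v_0$ of $\pi_*T\F$ near $q$. By the proof of Lemma~\ref{L:direct image} combined with the local freeness of $\pi_*T\F$ granted by Theorem~\ref{thm A}, in product coordinates $x_i$ on the base and $y_j$ on the universal cover of a fiber such a lift has the form
\[
v_0 \;=\; \sum_i a_i(x)\frac{\partial}{\partial x_i} \;+\; \sum_j b_j(x)\frac{\partial}{\partial y_j}, \qquad \phi(v_0)=v,
\]
with coefficients depending only on $x$. The three properties of the flow $\Phi_t$ of $v_0$ then follow: (i) from $v_0\in T\F$ and involutivity of $T\F$; (ii) from the fact that the equation $\dot x=a(x)$ is autonomous in $x$ alone, so that $\pi\circ\Phi_t$ is the flow of $v$ on $Y$; and (iii) from integrating $\dot y=b(x(s))$, which shows $\Phi_t|_{F_q}\colon F_q\to F_{q(t)}$ reads in the $y$-coordinates as translation by $\int_0^t b(x(s))\,ds$, that is, as a translation in the torus structure on $T$ coming from the Borel--Remmert decomposition.

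To conclude, I would note that the identification of distinct fibers used to define $f$ is exactly the one provided by the torus group law on $T$: indeed $\pi$ is induced by a group homomorphism $\pi_T\colon T\to T'$, so fibers of $\pi$ are cosets of $\ker\pi_T$ and their tangent spaces at the origin are canonically identified with $T_0\ker\pi_T$. Under this identification, the derivative along the fiber of a torus translation is the identity, so $\Phi_t|_{F_q}$ carries $f(q)$ to itself viewed inside the common model space; by (i) the same subspace must equal $f(q(t))$, so $f(q)=f(q(t))$. Letting $v$ range over all local sections of $\mc T$ yields the claim. The only mildly delicate point, and the one I would be most careful about, is checking that the torus identification used to define the classifying map $f$ agrees with the one produced by the flow --- but both are dictated by the group structure on $T$, so this matching is automatic.
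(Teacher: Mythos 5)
Your proof is correct, and it rests on the same two facts that drive the paper's argument --- that local sections of $\pi_*T\F$, read as vector fields on $\pi\inv(U)\cong U\times F$, have coefficients depending only on the base coordinate (this is where Lemma~\ref{L:direct image} and the local freeness from Theorem~\ref{thm A} enter, exactly as you cite them), and that the restriction of $\F$ to each fiber is a translation-invariant linear foliation --- but it integrates where the paper differentiates. The paper fixes a disc $\mb D$ inside a leaf of $\mc G$, chooses an adapted frame $v_0,v_1,\ldots,v_h$ of the restriction of $T\F$ to $\pi\inv(\mb D)$, and extracts the constancy of the fiberwise coefficients $a^{(i)}_j$ from the bracket identity $[v_0,v_i]\in \mathrm{Span}(v_0,\ldots,v_h)$ forced by involutivity. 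You instead flow along a single fiber-constant lift $v_0$ of $v\in\mc T$ and observe that this flow simultaneously preserves $\F$ and acts on fibers as a translation; since its fiberwise derivative is the identity in the universal-cover coordinates, it carries $T\F\cap TF_q$ to $T\F\cap TF_{q(t)}$ without moving the corresponding point of the Grassmannian. The two arguments are equivalent in substance (invariance of $T\F$ under the flow of $v_0$ is the integrated form of $L_{v_0}T\F\subseteq T\F$), but yours makes the geometric mechanism --- a one-parameter family of torus translations conjugating the fiberwise foliations to one another --- explicit and avoids writing down the frame. Your closing concern about matching the two identifications of fibers is the right thing to worry about, and your resolution is sound: both the identification used to define $f$ and the fiber-to-fiber map produced by the flow are translations for the group structure coming from the Borel--Remmert decomposition, so their derivatives agree on $T_0F$. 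The only cosmetic caveat is that, as in the paper, the conclusion is first obtained near points where $\phi$ has maximal rank and then extended to whole leaves by connectedness and the meromorphy of $f$.
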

\begin{proof}
Let $\mb D\subset Y$ be a small disc contained in one of the leaves of $\mc G$. It suffices to prove that $f$ is constant on $\mb D$.

Let us fix linear coordinates $x_1,\ldots x_k$ on the universal covering of a  fiber and a coordinate $y$ on $\mb D$. We call $\mc H$ the foliation induced by $\mc F$ on $\pi\inv(\mb D)\cong \mb D\times F$. We are going to prove that the foliation induced by $\mc H$ on each fiber over $\mb D$ (which is the same as the foliation induced by $\F$) does not depend on the chosen fiber (meaning that $f$ is constant along $\mb D$).

The foliation $\mc H$ is  defined by some vector fields; up to changing the order of the $x_i$'s we can choose a base $v_0,\ldots, v_h$ of the space of local sections of $T\mc H$ of the form
$$v_0=\frac{\partial}{\partial y}+\sum_{j\geq h+1}a^{(0)}_j(y)\frac{\partial}{\partial x_j},
\qquad
 v_i=\frac{\partial}{\partial x_i}+\sum_{j\geq h+1}a^{(i)}_j(y)\frac {\partial}{\partial x_j} \quad i=1,\ldots ,h.$$
Here the $a^{(i)}_j$ only depend on $y$ because $\det(N\F)$ is trivial around fibers, so that $\F$ is defined by global holomorphic $1$-forms on fibers.

The foliation induced by $\mc H$ on each fiber is defined by the vector fields $v_1,\ldots, v_h$. Thus we have to prove that $a^{(i)}_j$ is constant for all $i=1,\ldots ,h$ and $j\geq h+1$. In order to do that we compute the bracket
$$[v_0,v_i]=\sum_{j\geq h+1}\frac{\partial a^{(i)}_j}{\partial y}\frac{\partial}{\partial x_j}.$$
By involutivity we must have $[v_0,v_i]\in Span(v_0,\ldots ,v_h)$, but it is easy to see that then $[v_0,v_i]=0$, so that the $a^{(i)}_j$s are constant along $\mb D$. This proves the lemma.
\end{proof}

\begin{cor}
\label{cor: constant Gr}
If there exists a leaf of $\mc G$ analytically dense then $f\colon Y\to Gr(k,\dim F)$ is constant. In particular the conclusion holds if all the leaves of $\F$ are analytically dense.
\end{cor}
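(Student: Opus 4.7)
The plan is to turn the local constancy of $f$ established in Lemma \ref{L:firstintegral} into a global constancy by showing that the value attained on an analytically dense leaf spreads out to a closed $\mc G$-invariant analytic subvariety of $Y$, which by analytic density can only be $Y$ itself. Denote by $U \subseteq Y$ the domain of definition of $f$. Given an analytically dense leaf $L$ of $\mc G$, the complement $Y\setminus U$ is a proper analytic subset, so $L\cap U$ is non-empty; pick a disc $\mb D\subseteq L\cap U$ on which, by Lemma \ref{L:firstintegral}, $f$ is identically equal to some $p_0\in Gr(k,\dim F)$.

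Next, I would choose a projective birational morphism $\mu\colon \tilde Y\to Y$ resolving the indeterminacies of $f$, so that $\tilde f:=f\circ \mu$ extends to a holomorphic map $\tilde Y\to Gr(k,\dim F)$; we may assume $\mu$ is an isomorphism over $U$. Put $E:=\mu(\tilde f\inv(p_0))$, which is a closed analytic subvariety of $Y$ by the proper mapping theorem. Under the identification $\mu\inv(U)\cong U$ one has $E\cap U=f\inv(p_0)$, so $E$ contains $\mb D$. Because $f$ is constant along leaves of $\mc G|_U$, the subset $E\cap(U\setminus \sing(\mc G))$ is saturated by leaves of $\mc G$; equivalently $T\mc G$ is tangent to $E$ at every smooth point of $E$ lying in $U\setminus \sing(\mc G)$. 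Being a closed analytic condition, this tangency extends to all smooth points of $E$ outside $\sing(\mc G)$, so $E$ is $\mc G$-invariant. The leaf $L$, being the integral manifold of $T\mc G$ through any point of $\mb D\subseteq E$, is then contained in $E$; analytic density of $L$ forces $E=Y$, and hence $\tilde f\equiv p_0$ on a dense open subset of $\tilde Y$ and, by holomorphicity, everywhere.

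For the ``in particular'' clause, it suffices to observe that the image $\pi(L')$ of an analytically dense leaf $L'$ of $\F$ cannot lie in a proper compact subvariety $W\subsetneq Y$: otherwise $L'\subseteq \pi\inv(W)$ would be contained in a proper compact subvariety of $X$, contradicting the analytic density of $L'$. Hence the leaf of $\mc G$ through $\pi(L')$ is analytically dense, reducing this case to the first part. I expect the main obstacle to be the verification of $\mc G$-invariance of $E$: one must propagate the leaf-saturation that holds on $E\cap(U\setminus\sing(\mc G))$ across both the indeterminacy locus of $f$ and the singular locus of $\mc G$, and across possible singularities of $E$ itself. Once this is in place, the remainder of the argument is a routine combination of the proper mapping theorem, invariance of closed analytic subvarieties under foliations, and the analytic density hypothesis.
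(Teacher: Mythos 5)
Your argument is correct and is essentially the paper's (implicit) argument: the paper gives no separate proof of this corollary, treating it as an immediate consequence of Lemma \ref{L:firstintegral} via the observation that the components of $f$ are meromorphic first integrals of $\mc G$, and a foliation with an analytically dense leaf admits no non-constant meromorphic first integral; your proposal simply makes that implication precise. The only point needing a touch more care is the propagation of $\mc G$-invariance: you should restrict attention to the irreducible component $E_0$ of $E$ containing $\mb D$ (other components of $E$ could lie entirely inside the indeterminacy locus of $f$ or inside $\sing(\mc G)$, where your ``closed analytic condition'' argument gives nothing), noting that $E_0\cap(U\setminus\sing(\mc G))$ is dense in $E_0$ because $E_0$ is irreducible and contains $\mb D$; this component suffices, since $E_0\supseteq L$ and the analytic density of $L$ already force $E_0=Y$.
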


It follows from Lemma \ref{L:firstintegral} that the field of meromorphic first integrals of $\mathcal G$, i.e. the 
subfield of $\mathbb C(Y)$ formed by meromorphic functions
which are constant along the leaves of $\mathcal G$, contains  the field of meromorphic functions on the Zariski closure of $f(Y)$.

\subsection{Proof of Theorem \ref{thm C}}
By Theorem \ref{thm A} it suffices to prove that, if the fibers of the normal reduction $\pi\colon X\to Y$ are tori and all the leaves of $\F$ are analytically dense, then $X$ is a torus and $\F$ is a linear foliation.

The image $\mc T$ of $\phi \colon \pi_*T\F\to TY$ satisfies the hypothesis of Lemma \ref{L:invsing0}. Since $S = \sing(\mc T)$ is $\mathcal T$-invariant we see that $\pi\inv(S)$ is $\F$-invariant. As we are assuming that every leaf of $\F$ is analytically dense, it follows that $\sing(\mc T)$ is empty and  $\mc T$ satisfies the hypothesis of Lemma \ref{invariance of Sing}. Analogously, we deduce that $\sing(TY / \mc T)$ is also empty and consequently $\mc T$ is the tangent bundle of a smooth foliation $\mc G$ on $Y$ such that $\mc G=\pi_*(\F)$.  Since all leaves of $\F$ are dense, all leaves of $\mc G$ are dense too.

Let us consider the commutative diagram
$$\begin{CD}
	@.		0		@.		0		@.		0				@.		\\
@.			@VVV			@VVV			@VVV						\\
0	@>>>	\mc K	@>>>	TX/Y	@>>>	(TX/Y)/	\mc K	@>>>	0\\
@.			@VVV			@VVV					@VVV			@.\\
0	@>>>	T\mc F	@>>>	TX		@>>>	N\mc F			@>>>	0\\
@.			@VVV			@VVV			@VVV					@.\\
0	@>>>	\pi^*T\mc G @>>> \pi^*TY @>>>	\pi^*N\mc G		@>>>	0\\
@.			@VVV			@VVV			@VVV					@.\\
	@.		0		@.		0		@.		0				@.		\\
\end{CD}.$$

As  previously remarked, it follows from Borel-Remmert that $X=R \times T$ and $Y = R \times T' $ and the normal reduction morphism $\pi: X \to Y$ is of the product of the identity over the rational manifold $R$ with a group homomorphism from the torus $T$ to the torus $T'$. In particular $TX/Y$ is a free sheaf over $X$.

We will now prove that $\mathcal K$ is also a free sheaf. First remark that, since $\mc T=T\mc G$ is the tangent sheaf of a smooth foliation, the rank of the sheaf morphism $\widetilde \phi\colon T\F\to \pi^* T\mc G$ must be maximal at every point, which means that $\mc K$ is locally free. Moreover it  is the tangent sheaf of a smooth foliation $\mc H$ on $X$ tangent to the fibers of $\pi$ (the intersection between the foliation $\F$ and the fibration $\pi$). Now by Corollary \ref{cor: constant Gr}, $T\mc F$ defines a linear subspace $V\subset T_0F$ on each fiber which does not depend on the choice of the fiber; this means that $\mc K=T\mc H\cong V\times X$ as desired.

 Using the triviality of $\det(TX/Y)$ and of $\det(\mathcal K)$, we can write
\begin{align*}
\det(N\F)& =\det(\pi^*N\mc G)\otimes \det((TX/Y)/\mc K) =\\
&=\pi^*\det(N\mc G)\otimes \det(TX/Y)\otimes \det (\mc K)\inv
=\pi^*\det(N\mc G).
\end{align*}
By the definition of normal reduction we have that $\det(N\mc G)$ is ample.  Bott's vanishing theorem reads as $\det(N\mc G)^{q'+1}=0$, where $q'$ is the codimension of $\mc G$.
But then $q'=\dim Y$, so $\mc G$ is the foliation by points, and since every leaf of $\mc G$ is dense we must have $Y=\{pt\}$.
It follows that $\det(N\F)$ is trivial, so that $\F$ is defined by a global holomorphic $q$-form on $X$. By Borel-Remmert theorem $X$ admits a decomposition
$$X\cong T\times R$$
where $T$ is a torus and $R$ is a rational homogeneous manifold. Since there exists no holomorphic global form on $R$, it is easy to see that the subvarieties $\{t\}\times R$ are contained in fibers of $\F$, and thus $X=T$ and $\F$ is defined by a global (i.e. linear) holomorphic form, which proves the theorem.
\qed

\section{Further  examples}\label{S:example}

Let $\mathcal F$ be a smooth foliation on a compact K\"ahler homogeneous manifold.
Using the notation of Theorem \ref{thm A}, let $\phi \colon \pi_*T\F\to TY$ be the composition of natural morphisms $\pi_*T\F\to \pi_*TX\to TY$. In general $\phi$ has no reason to be neither injective nor surjective. In the Introduction we already presented  examples in which the morphism above is not  surjective. For turbulent codimension one foliations on homogeneous manifolds of dimension at least three  the morphism $\phi$ is generically surjective but it is not  injective.

\subsection{Non injective and non generically surjective example}

A minor modification of Example \ref{E:dim1} allow us to construct an example for which the composition $\phi \colon \pi_*T\F \to TY$ is not injective nor generically surjective. As before let $\mathcal G$ be a one-dimensional foliation on $Y$ with isolated singularities and assume as before that there exists a section $\sigma \in H^0(Y,T^* \mathcal G)$ which does not vanish on $\sing(\mathcal G)$.
Let $A$ be a compact complex torus of   dimension $a$ and let $v_0, v_1, \ldots, v_k \in H^0(A,TA)$ be $(k+1)$ linearly independent vector fields on $A$ ($k\leq a-1 $). If $X = Y \times A$ then we have an injection
\begin{align*}
(\pi_Y^* T \mathcal G) \oplus{\mathcal O_X}^{\oplus k} &\longrightarrow T X = \pi_Y ^ * TY \oplus \pi_A^ * T A  \\
(w,f_1,\ldots, f_k) & \longmapsto (  w, \sigma(w) v_0 + \sum_{i=1}^ k f_i v_i ) \, .
\end{align*}
It is a simple matter to verify that the image of the morphism above is an involutive locally free subsheaf of $TX$ of rank $k+1$ with locally free cokernel. Therefore it defines a foliation
$\mathcal F$ on $X$ of dimension $k+1$, and by construction the image of
$\phi: \pi_* T \mathcal F \to TY$ coincides with $T\mathcal G$.

\subsection{Injective and generically surjective example of arbitrary codimension}

Let $Y\subset \mb P^N$ be an abelian variety  of dimension $d$; fix $d+1$ hyperplane sections $D_0,\ldots D_d$ of $Y$ such that
 $D=\sum D_i\in Div (Y)$ is a simple normal crossing divisor, and the 
 the intersection  
 $D_0\cap \ldots \cap D_d$ is empty.

Fix another  abelian variety  $F$ with the same dimension as $Y$.  We are going to produce a smooth foliation $\F$ on $X=Y\times F$  such that the image of $\pi_*T\F$ in $TY$ is isomorphic to $TY(\log D)$; in particular $\phi\colon \pi_*T\F \to TY$ is injective and generically surjective, but its image is not the tangent sheaf of a foliation since $TY/TY(\log D)$ is not torsion-free.

\begin{lemma}
\label{omega(logD) gg}
$\Omega_Y^1(\log D)$ is globally generated.
\end{lemma}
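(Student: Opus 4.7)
The plan is to produce explicitly a family of global sections of $\Omega^1_Y(\log D)$ that generate the stalk at every point. There are two natural sources of such sections. First, since $Y$ is an abelian variety, the cotangent bundle $\Omega^1_Y$ is trivial, so its global sections give $d$ everywhere-linearly-independent holomorphic $1$-forms $\eta_1,\ldots,\eta_d$ which already generate the subsheaf $\Omega^1_Y\subset \Omega^1_Y(\log D)$. Second, since each $D_i$ is a hyperplane section, all the $D_i$ are linearly equivalent (they lie in the same linear system $|L|$ where $L=\mathcal O_{\mathbb P^N}(1)|_Y$), so for every ordered pair $i\neq j$ in $\{0,\ldots,d\}$, the ratio $g_{ij}:=f_i/f_j$ of the defining sections is a rational function on $Y$ with divisor $D_i-D_j$. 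Its logarithmic derivative
\[
\omega_{ij}:=\frac{dg_{ij}}{g_{ij}}=\frac{df_i}{f_i}-\frac{df_j}{f_j}
\]
is then a global section of $\Omega^1_Y(\log(D_i+D_j))\subset \Omega^1_Y(\log D)$.

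Next I would verify pointwise generation. Fix $p\in Y$ and set $I=\{i:p\in D_i\}$. Using the SNC hypothesis on $D$, choose local coordinates $z_1,\ldots,z_d$ at $p$ in which $D_i$ (for $i\in I$) is cut out by some $z_r=0$; then $\Omega^1_Y(\log D)_p$ is freely generated over $\mathcal O_{Y,p}$ by the forms $dz_r/z_r$ (for those $r$ indexing elements of $I$) together with the remaining $dz_s$. The $dz_s$-part is contained in the image of $\Omega^1_{Y,p}$, which is generated by the globally defined $\eta_k$'s. For the logarithmic part, the crucial hypothesis $D_0\cap\ldots\cap D_d=\emptyset$ ensures $|I|\leq d$, so there exists at least one index $j\notin I$, i.e.\ $f_j(p)\neq 0$. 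For every $i\in I$ the form $df_j/f_j$ is then holomorphic at $p$ (hence in the $\mathcal O_{Y,p}$-span of the $\eta_k$), while the identity
\[
\frac{df_i}{f_i}=\omega_{ij}+\frac{df_j}{f_j}
\]
shows that $df_i/f_i$—which agrees with $dz_r/z_r$ up to a holomorphic $1$-form (since $f_i$ vanishes simply on $D_i=\{z_r=0\}$)—lies in the $\mathcal O_{Y,p}$-module generated by the global sections $\omega_{ij}$ and $\eta_1,\ldots,\eta_d$.

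Combining these two steps shows that the finite collection $\{\eta_1,\ldots,\eta_d\}\cup\{\omega_{ij}\}_{i\neq j}$ generates $\Omega^1_Y(\log D)$ at every point of $Y$, which is exactly the statement of the lemma. The only conceptual step is recognizing that the triviality of $\Omega^1_Y$ handles the holomorphic part while the disjointness $\bigcap D_i=\emptyset$ is what guarantees that, no matter which subset of the divisor components passes through $p$, at least one "denominator" $f_j$ is available to build the required log generators; everything else is bookkeeping with the SNC local model. I do not anticipate a serious obstacle beyond ensuring these two hypotheses are used at the right places.
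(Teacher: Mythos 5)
Your proposal is correct and follows essentially the same route as the paper's own proof: the triviality of $\Omega^1_Y$ on the abelian variety handles the holomorphic generators, and the global logarithmic forms $d\log(f_i/f_j)$ (the paper's $d\log(l_i/l_0)$) supply the poles along each $D_i$ through $p$, using $D_0\cap\ldots\cap D_d=\emptyset$ to guarantee a pole-free denominator at $p$. No substantive differences.
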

\begin{proof}
Let $p\in Y$. Then $p$ belongs to at most $d$ of the $D_i$-s, let's say $D_1,\ldots, D_k$. Let us fix coordinates $x_1,\ldots x_d$ of $Y$ around $p$ such that $D_i=\{x_i=0\}$ for $i=1,\ldots, k$; then the local sections of $\Omega_Y^1(\log D)$ around $p$ are generated by the forms
$$\frac {dx_1}{x_1},\ldots , \frac{dx_k}{x_k},dx_{k+1},\ldots, dx_d.$$

Now if $l_0,\ldots l_d$ are linear equations on $\mb P^N$ defining the $D_i$-s, then the restriction to $Y$ of the meromorphic form $d\log (l_i/l_j)$ defines a global section of $\Omega_Y^1(\log D)$ whose only poles are along $D_i$ and $D_j$.
In local coordinates around $p$ we have therefore
$$d\log (l_i/l_0)=\lambda_i \frac{dx_i}{x_i}+hol$$
for $i=1,\ldots, k$. Since $Y$ is a torus, the sheaf of holomorphic forms $\Omega_Y^1$ is trivial and in particular globally generated. This proves that each one of the forms $dx_i/x_i$ can be generated by global sections, which proves the statement.
\end{proof}

Any section $\sigma: Y \to X$ of $\pi:X \to Y$ defines a foliation $\mathcal H$ on $X$ with leaves  equal to translations of $\sigma(Y)$ by elements of $F$.  The foliation $\mathcal H$ induces a  natural inclusion of
$\pi^* TY(\log D)$ in $TX$, namely
\[
h \colon \pi^* TY(\log D) \to T \mathcal H \subset TX .
\]
In this way we can see $\pi^* TY(\log D)$ as an involutive subsheaf of $TX$. Note that it does  not
coincide with the
tangent sheaf of $\mathcal H$; the cokernel of the inclusion $\pi^* TY(\log D)\to T\mathcal H$ is not torsion-free.

We want to construct a {\it vertical} perturbation
$$v\colon \pi^* TY(\log D) \to TX/Y \hookrightarrow TX$$
in order to define the  sheaf $T\F$  as the  image of the sum morphism
$$h+v\colon \pi^ * TY(\log D)\to TX.$$

Let $y_1,\ldots , y_l$ be linear coordinates on the universal covering of  $F$. The morphism $v$ can be written as
$$v=\sum_{i=1}^l (\pi^* s_i)\frac{\partial}{\partial y_i}$$
for some global sections $s_1\ldots ,s_l$ of $\Omega_Y^1(\log D)$; reciprocally, every choice of global sections $s_1,\ldots ,s_l$ gives a morphism $v\colon TY(\log D)\to \pi_*TX/Y$.
We choose global sections $s_1,\ldots ,s_l$ such that on each point of $D$ the image of the vertical morphism defined by the $s_i$ is a vector space of maximal dimension $d$; the existence of such sections for a suitable $l$ is assured by Lemma \ref{omega(logD) gg}.

In order for $T\F$ to be the tangent sheaf of a smooth foliation, we have to verify that: (a) $T\mathcal F$ is involutive; $T\F$ is locally free;
and $TX/T\F$ is locally free. The fact that $T\F$ and $TX/T\F$ are locally free follows from the definition of $v$.

In order to prove the integrability of $T\mathcal F$ take two local sections of $\pi^* TY(\log D)$
$$v_1=\sum_{i=1}^d a_i\de_{x_i},\qquad v_2=\sum_{j=1}^d b_j\de_{x_j},$$
where $x_1,\ldots x_d$ are local coordinates on $Y$ and $\de_{x_i}:=\de/\de x_i$.
We have to check that
$$[(h+v)(v_1);(h+v)(v_2)]\in \im (h+v).$$
Since $\pi^* TY(\log D)$ is globally generated around fibers of $\pi$, we can assume without loss of generality that  $a_i$ and $b_j$ only depend on the $x$ coordinates (i.e. are constant along the fibers of $\pi$). Thus
$$[(h+v)(v_1);(h+v)(v_2)]=h([v_1;v_2])+[h(v_1);v(v_2)]+[v(v_1);h(v_2)].$$
Hence we have to prove that $[h(v_1);v(v_2)]+[v(v_1);h(v_2)]=v([v_1,v_2])$. We have
$$[v_1;v_2]=\sum a_i\frac {\de b_j}{\de x_i} \de_{x_j}-\sum b_j \frac{\de a_i}{\de x_j}\de_{x_i}=\sum \left( a_i\frac{\de b_j}{\de x_i} - b_i\frac {\de a_j}{\de x_i} \right)\de_{x_j},$$
so that
$$v([v_1;v_2])=\sum \left( a_i\frac{\de b_j}{\de x_i} - b_i\frac {\de a_j}{\de x_i} \right)s_k(\de_{x_j})\de_{y_k}.$$
On the other hand
$$[h(v_1);v(v_2)]=\sum a_i\frac {\de b_j}{\de x_i}s_k(\de_{x_j})\de_{y_k}+\sum a_ib_j\frac {\de}{\de x_i}(s_k(\de_{x_j}))\de_{y_k}$$
so that
$$[h(v_1);v(v_2)]+[v(v_1);h(v_2)]=$$
$$v([v_1;v_2])+\sum a_ib_j\left( \frac {\de}{\de x_j}(s_k(\de_{x_i}))- \frac {\de}{\de x_i}(s_k(\de_{x_j})) \right) \de_{y_k}.$$

Since the equality must be true for all choices of $v_1$ and $v_2$, we now have to prove that for all $i,j,k$
$$\frac {\de}{\de x_j}(s_k(\de_{x_i}))- \frac {\de}{\de x_i}(s_k(\de_{x_j}))=0,$$
that is
$ds_k(\de_{x_i},\de_{x_j})=0$.

Since $s_k$ is a logarithmic $1$-form with normal crossing polar divisor on a compact K\"ahler manifold, it must be  closed  by  a Theorem of Deligne, see  \cite[Corollary 3.2.14]{MR0498551} for the original proof, and \cite{MR1346909} or \cite[Lemma 2.1]{MR1800822} for short analytic proofs.

\bibliography{references}{}

\begin{thebibliography}{1}

\bibitem{MR0377923}
Paul Baum and Raoul Bott.
\newblock Singularities of holomorphic foliations.
\newblock {\em J. Differential Geometry}, 7:279--342, 1972.

\bibitem{MR1474805}
Marco Brunella.
\newblock Feuilletages holomorphes sur les surfaces complexes compactes.
\newblock {\em Ann. Sci. \'Ecole Norm. Sup. (4)}, 30(5):569--594, 1997.

\bibitem{MR2674768}
Marco Brunella.
\newblock Codimension one foliations on complex tori.
\newblock {\em Ann. Fac. Sci. Toulouse Math. (6)}, 19(2):405--418, 2010.

\bibitem{MR1800822}
Marco Brunella and Lu{\'{\i}}s Gustavo~Mendes.
\newblock Bounding the degree of solutions to {P}faff equations.
\newblock {\em Publ. Mat.}, 44(2):593--604, 2000.

\bibitem{MR0498551}
Pierre Deligne.
\newblock Th\'eorie de {H}odge. {II}.
\newblock {\em Inst. Hautes \'Etudes Sci. Publ. Math.}, (40):5--57, 1971.

\bibitem{MR1322960}
David Eisenbud.
\newblock {\em Commutative algebra}, volume 150 of {\em Graduate Texts in
  Mathematics}.
\newblock Springer-Verlag, New York, 1995.
\newblock With a view toward algebraic geometry.

\bibitem{MR1440947}
{\'E}tienne Ghys.
\newblock Feuilletages holomorphes de codimension un sur les espaces
  homog\`enes complexes.
\newblock {\em Ann. Fac. Sci. Toulouse Math. (6)}, 5(3):493--519, 1996.

\bibitem{MR1346909}
Junjiro Noguchi.
\newblock A short analytic proof of closedness of logarithmic forms.
\newblock {\em Kodai Math. J.}, 18(2):295--299, 1995.

\bibitem{MR2030093}
Ivan Pan and Marcos Sebastiani.
\newblock Classification des feuilletages turbulents.
\newblock {\em Ann. Fac. Sci. Toulouse Math. (6)}, 12(3):395--413, 2003.

\end{thebibliography}
\bibliographystyle{plain}
\end{document}